\pdfoutput=1
\documentclass[toc]{mathprint} %
\usepackage{rutarmacros}
\usepackage{macros}

\title[Attainable lower spectra]{Attainable forms of lower spectra}

\author
  {Amlan Banaji}
  {Department of Mathematics and Statistics,
      University of Jyväskylä,
      P.O.\ Box 35 (MaD),
      FI-40014 University of Jyväskylä,
  Finland}
  {banajimath@gmail.com}

\author
  {Haipeng Chen}
  {School of Artificial Intelligence, Shenzhen Technological University, Shenzhen, Guangdong, China, 518118}
  {hpchen0703@foxmail.com}

\author
  {Alex Rutar}
  {Department of Mathematics and Statistics,
      University of Jyväskylä,
      P.O.\ Box 35 (MaD),
      FI-40014 University of Jyväskylä,
  Finland}
  {alex@rutar.org}

\author
  {Wen Wang}
  {School of Mathematics and Statistics, Yunnan University, Kunming, Yunnan, China, 650504}
  {sophia\_84@126.com}

\begin{document}
\begin{abstract}
    Let $d\in\mathbb{N}$ and $\varphi\colon(0,1)\to[0,d]$.
    We prove there exists a set $F\subset\mathbb{R}^d$ whose lower spectrum $\operatorname{dim}^{\theta}_{\mathrm{L}} F$ satisfies $(1-\theta)\operatorname{dim}^{\theta}_{\mathrm{L}} F = \varphi(\theta)$ for all $\theta\in(0,1)$ if and only if for all $\lambda,\theta\in(0,1)$,
    \begin{equation*}
        \varphi(\theta) \leq \varphi(\lambda\theta) - \theta \varphi(\lambda) \leq (1-\theta) d.
    \end{equation*}
    We also obtain a similar classification result for $\underline{\operatorname{dim}}^{\theta}_{\mathrm{L}} F$.

    In contrast to the case for Assouad spectra, it is insufficient to consider homogeneous (or uniform) sets.
    Instead, we follow the approach introduced by Orgoványi--Rutar in \cite{arxiv:2510.07013} and proceed via a more general classification result for appropriate two-scale branching functions.
    \vspace{0.5cm}

    \emph{Key words and phrases.} lower spectrum, dimension classification, branching functions

    \vspace{0.2cm}

    \emph{2020 Mathematics Subject Classification.} 28A80 (Primary); 39B62 (Secondary)
\end{abstract}

\section{Introduction}
Let $X$ be a metric space.
The goal of this paper is to study the general properties of the \emph{lower spectrum}.
For $0 < \theta < 1$, it is defined by
\begin{align*}
    \dimLs\theta X = \sup\Bigl\{s \geq 0:\exists &C>0\,\forall 0<r < 1\\*
                                                      &\inf_{x\in X}N_r(B(x,r^\theta)) \geq C \left(\frac{r^\theta}{r}\right)^s\Bigr\}.
\end{align*}
Here and elsewhere, $B(x, r)$ denotes the closed ball centred at $x$ with radius $r$ and $N_r(E)$ denotes the least number of closed balls of radius $r$ required to cover a set $E\subset X$.
The lower spectrum was motivated by the lower dimension $\dim_{\mathrm{L}} X$, which was introduced in \cite{zbl:0152.24502}.
For $0 < \theta < 1$, the inequalities
\begin{equation*}
	\dim_{\mathrm{L}} X \leq \dimLs\theta X \leq \underline{\dim}_{\mathrm{B}} X
\end{equation*}
always hold, where $\underline{\dim}_{\mathrm{B}} X$ is the lower box (or lower Minkowski) dimension.

The lower spectrum is the dual notion to the (more commonly studied) notion of the \emph{Assouad spectrum}.
We will not discuss this in any more detail outside the introduction, but for the convenience of the reader we mention the definition here: for $0<\theta < 1$, it is defined by
\begin{align*}
    \dimAs\theta X = \inf\Bigl\{s \geq 0:\exists &C>0\,\forall 0<r < 1\\*
                                                      &\sup_{x\in X}N_r(B(x,r^\theta)) \leq C \left(\frac{r^\theta}{r}\right)^s\Bigr\}.
\end{align*}
Both the Assouad spectrum and the lower spectrum were introduced in \cite{zbl:1390.28019} and calculated for various families of fractals in \cite{zbl:1407.28002}; more context can be found in the book \cite{zbl:1467.28001}.
The Assouad spectrum is useful for certain embedding problems \cite{zbl:1468.28005,zbl:1549.30098}, and plays an important role in many ``fractal'' problems in harmonic analysis \cite{arxiv:2602.17613,zbmath:8140773,zbl:1561.42022,zbl:1526.42033}.
To the best knowledge of the authors, the lower spectrum has not yet appeared in such contexts.
Regardless, the motivation to study the lower spectrum is similar to the motivation for the Assouad spectrum, since they both capture a similar type of scaling information.

In \cite{zbl:1562.28062}, the author obtained a precise answer to the following question: \emph{What functions can appear as the Assouad spectrum of a subset of $\R^d$?}
A consequence of such a classification result is that it clarifies the properties of the Assouad spectrum, and it is useful for constructing examples.

The primary goal of this paper is to answer the analogous question for the lower spectrum.
A secondary goal is to collect the various properties which have appeared throughout the literature and to clarify the properties of the lower spectrum for general sets.

\subsection{Two-scale branching functions and classification}
We now state our main classification result for the lower spectrum.
\begin{itheorem}\label{it:attain}
    Let $d\in\N$ and $\varphi\colon(0,1]\to[0,d]$.
    Then there exists a set $F\subset\R^d$ such that $(1-\theta)\dimLs\theta F = \varphi(\theta)$ for all $\theta\in(0,1]$ if and only if for all $\lambda,\theta\in(0,1]$,
    \begin{equation}\label{e:ql}
        \varphi(\theta) \leq \varphi(\lambda\theta) - \theta \varphi(\lambda) \leq (1-\theta) d.
    \end{equation}
\end{itheorem}
In the statement, even though we have not formally defined $\dimLs\theta F$ for $\theta = 1$, we can define $(1-\theta)\dimLs\theta F$ at $1$ by taking a limit (which always yields the value $0$).

It was already known that the first inequality of \cref{e:ql} holds for every uniformly perfect $F \subset \R^d$ (this follows from \cite[Proposition~2.1]{zbl:1434.28014}) and that the second inequality holds for every $F \subset \R^d$ (this follows from the second part of \cite[Proposition~3.1]{zbl:1467.28001}).
We will give a full proof of the necessity of \cref{e:ql} (without the uniformly perfect assumption) for completeness.
The main novelty in \cref{it:attain} is the sufficiency of \cref{e:ql}.

Since the lower spectrum is defined in a way which seems essentially identical to the Assouad spectrum (except with the various inequalities reversed), one might expect at first that the proof technique from \cite{zbl:1562.28062} could be modified in a straightforward way to work for the lower spectrum.
Perhaps surprisingly, the corresponding classification for the lower spectrum from \cref{it:attain} is qualitatively quite different.
Indeed, for the Assouad spectrum, while the reverse of the first inequality of \cref{e:ql} does appear, the second inequality is replaced with the property that $\varphi$ is $d$-Lipschitz, whereas for the lower spectrum the function $\varphi$ need not be Lipschitz on $(0,1)$.

The reason for this difference is that the lower spectrum has an essential qualitative feature that does not appear for the Assouad spectrum.
Consider the space $X = [0,1/2]\cup \{1\}$.
Then, for small $r$, $\inf_{x\in X}N_r(B(x,1)\cap X)\approx r^{-1}$, whereas $\inf_{x\in X}N_r(B(x, 1/2)\cap X) \approx 1$.
In other words, changing the radius of the larger ball by a constant factor can result in an essentially maximal change in the covering number.
This type of phenomenon does not occur with the Assouad spectrum (as long as the space $X$ is not too large, for instance, if it is doubling).
In particular, attempting to obtain a classification for the lower spectrum using uniform sets (for instance by using homogeneous Moran sets, as used in \cite{zbl:1509.28005, zbl:1562.28062}) is not possible; see \cref{it:attain-u} below.

Instead, we will approach the classification by studying a more general function, which we call the \emph{infimal two-scale branching function}.
Let $\Delta = \{(u, v) \in \R^2: v \leq u\}$.
We let $\lb = \lb_X\colon \Delta \to [0, \infty]$ denote the function
\begin{equation*}
    \lb_X(u,v) = \log \inf_{x\in X}P_{2^{-u}}(B(x, 2^{-v})).
\end{equation*}
Here and elsewhere, the logarithm is in base 2 and for a set $E$, $P_r(E)$ denotes the maximal cardinality of a subset of $E$ such that distinct points in the subset are separated by distance strictly larger than $4r$.
The reason for this choice is that it makes $\lb_X$ superadditive (see \cref{l:superad}): for $v \leq w \leq u$,
\begin{equation}\label{e:superadd}
    \lb_X(u,v) \geq \lb_X(u, w) + \lb_X(w, v).
\end{equation}
With other notions of covering (or packing), one only obtains this inequality with an extra error term, and it is convenient not to have to worry about this.
The idea to study this branching function originates in \cite{arxiv:2510.07013}, which introduces and studies an analogous function with a supremum over covering numbers in place of the infimum over packings.

For subsets $X\subset\R^d$, it is also easy to check for all $v\in\R$ that $u\mapsto \lb_X(u,v)$ is $d$-Lipschitz in an approximate sense.
In contrast, the function $v\mapsto \lb_X(u,v)$ need not be anywhere close to Lipschitz.
This is already the case for the example $[0,1/2]\cup\{1\}$ described above.

We obtain a classification result for the possible two-scale branching functions $\lb_X$.
In order to state our result, we require two definitions.
\begin{definition}\label{d:L}
    Let $\alpha \geq 0$.
    We let $\mathcal{L}(\alpha)$ denote the functions $f\colon\Delta\to [0,\infty)$ such that the following hold:
    \begin{enumerate}[nl,r]
        \item\label{i:1} $f(u,u) =0$ for all $u \geq 0$.
        \item\label{i:2} For all $v \leq w \leq u$,
            \begin{equation}\label{e:mono}
                f(u,v) \geq f(u,w) + f(w,v).
            \end{equation}
        \item\label{i:3} For all $v$, the function $u\mapsto f(u,v)$ is $\alpha$-Lipschitz.
    \end{enumerate}
\end{definition}
Note that it follows from \cref{e:mono} that $f(u,v)$ is an increasing function of $u$ and a decreasing function of $v$.
Also, we note that condition \cref{i:2} implies the first inequality in \cref{e:ql}, and \cref{i:3} implies the second inequality in \cref{e:ql}.

We also require an equivalence relation on the functions $f\colon\Delta\to[0,\infty)$.
\begin{definition}\label{d:csim}
    Let $f,g\colon\Delta\to[0,\infty)$.
    We write $f\sim g$ if there exists a constant $z \geq 0$ such that for all $0 \leq v \leq u$,
    \begin{itemize}[nl]
        \item $f(u, v+z) -z \leq g(u,v)$ whenever $v \leq u-z$, and
        \item $g(u,v) \leq f(u, v-z) + z$ whenever $v \geq z$.
    \end{itemize}
\end{definition}
One can think of the relation $f\sim g$ as stating that $f$ and $g$ are equal up to a fixed additive error term; the actual statement allows some flexibility since we have no control on the function $v\mapsto\lb_X(u,v)$ at specific points in general (other than knowing that it is monotone).
Below, we use the subscript $f\sim_d g$ to indicate that the implicit constant $z$ depends only on $d$.
\begin{itheorem}\label{it:two-scale}
    Let $d\in\N$.
    \begin{enumerate}[nl,r]
        \item\label{i:fL} If $X\subset\R^d$, then there is a function $f\in\mathcal{L}(d)$ such that $f\sim_d\lb_X$.
        \item\label{i:ext} If $f\in\mathcal{L}(d)$, then there exists a set $X\subset\R^d$ such that $f\sim_d\lb_X$.
    \end{enumerate}
\end{itheorem}

Let us give a brief outline of the proofs of \cref{it:attain} and \cref{it:two-scale}.
In \cref{ss:twoscale}, we establish basic properties of the two-scale branching function $\lb_X$.
Most of the work consists of manipulation to establish the Lipschitz property; this is done in \cref{ss:lip}.
By the end of \cref{ss:lip}, we will have proven \cref{it:two-scale}~\cref{i:fL}
Then in \cref{ss:attain-ts}, we prove \cref{it:two-scale}~\cref{i:ext}.
(We construct the set as a countable disjoint union of the sets described in \cref{ss:unif-ct}.)

In \cref{ss:norm}, we introduce a normalized limit of branching functions and prove that the limits must satisfy the inequalities in \cref{it:attain}.
After introducing the limit, we establish a symbolic variant of \cref{it:attain} in \cref{l:L-in-H}, and then we determine some of its formal properties in \cref{ss:cont-lip}.
Next, in \cref{ss:equiv-l}, we show that this limit is unchanged by the equivalence relation in \cref{d:csim}, and then prove that it corresponds exactly to the lower spectrum of $X$.
To complete the proof of \cref{it:attain}, it remains to judiciously choose a function $f\in\mathcal{L}(d)$ and then apply \cref{it:two-scale}~\cref{i:ext}.

\subsection{Additional classification results}\label{ss:add}
We conclude the introduction by stating two additional classification results.

The first classification result concerns uniform sets.
Heuristically, a uniform set is one for which the function $x\mapsto N_r(B(x,R))$ is approximately constant on $X$, independently of $0<r \leq R$.
In practice, it is only necessary that the error term is sufficiently small as a function of $r$; see \cref{d:unif} for the precise definition.

In \cite{zbl:1562.28062}, it is proven that every Assouad spectrum function can be obtained as the Assouad spectrum of a uniform set (see also \cite[§2.4]{zbmath:08078927}).
In contrast, for the lower spectrum, we saw in the previous section that the essential difficulty involved non-uniform sets.
This motivated us to pursue the classification using a more general classification result for two-scale branching functions.

To give a rigorous justification that it would not suffice to use uniform sets in the proof of \cref{it:attain}, we prove the following result in \cref{s:unif}.
\begin{itheorem}\label{it:attain-u}
    Let $d\in\N$ and $\varphi\colon(0,1]\to [0,d]$.
    Then there is a uniform set $F\subset\R^d$ with $(1-\theta)\dimLs\theta F = \varphi(\theta)$ for all $\theta\in(0,1]$ if and only if $\varphi$ is $d$-Lipschitz and
    \begin{equation*}
        \varphi(\theta) \leq \varphi(\lambda\theta) - \theta \varphi(\lambda)
    \end{equation*}
    for all $\lambda,\theta\in(0,1]$.
\end{itheorem}
Comparing this to \cref{it:attain}, the key difference is that the second inequality in \cref{e:ql} is replaced with the $d$-Lipschitz property.
This results in a strictly smaller family of functions; see \cref{l:U-inc}.

The proof of \cref{it:attain-u} is very similar to the proof of \cite[Theorem~A]{zbl:1562.28062}.
Here, we write the proof in the language of branching functions, which makes it quite a bit easier to understand the algebra.
The details can be found in \cref{ss:unif-attain}.

Our final classification result concerns a monotone version of the lower spectrum.
For $0 < \theta < 1$, the \emph{monotone lower spectrum} is given by
\begin{align*}
    \dimuLs\theta X = \sup\Bigl\{s \geq 0:\exists &C>0\,\forall 0<r \leq r^\theta \leq R < 1\\*
                                                       &\inf_{x\in X}N_r(B(x,R)\cap X) \geq C \left(\frac{R}{r}\right)^s\Bigr\}.
\end{align*}
This was first defined in \cite{zbl:1371.28003}, where it was called the quasi-lower spectrum.
For $X\subset\R^d$, it was proven in \cite[Theorem~A.2]{zbl:1479.28010} that for $0<\theta<1$,
\begin{equation}\label{e:vari}
    \dimuLs\theta X = \inf_{0<\lambda\leq \theta}\dimLs\lambda X.
\end{equation}
See also \cite[Theorem~1.1]{zbl:1434.28014}.
In \cref{s:monot}, we also give a short and simple proof of \cref{e:vari} when $X$ is an arbitrary metric space.

Combining \cref{it:attain} and \cref{e:vari} with short arguments provided in \cref{ss:mono}, we obtain the following classification result for the monotone lower spectrum.
\begin{icorollary}\label{ic:mono-class}
    Let $d\in\N$ and $\varphi\colon(0,1]\to[0,d]$.
    Then there exists $F\subset\R^d$ such that $(1-\theta)\dimuLs\theta F = \varphi(\theta)$ for all $\theta\in(0,1]$ if and only if $\theta\mapsto \varphi(\theta)/(1-\theta)$ is decreasing and
    \begin{equation*}
        \varphi(\lambda\theta) - \theta \varphi(\lambda) \leq (1-\theta) d
    \end{equation*}
    for all $\lambda,\theta\in(0,1]$.
\end{icorollary}
We also obtain a complete description of the set of functions $\theta\mapsto(1-\theta)\dimuLs\theta F$ for subsets $F\subset\R^d$ as infima of an explicit family of functions; see \cref{t:mono-class} for more detail.
As was the case for uniform sets, the family of functions in \cref{ic:mono-class} is strictly smaller since the lower spectrum need not be decreasing in general.
Such an example was first constructed in \cite[§8]{zbl:1390.28019}; we provide another example with an explicit formula for the lower spectrum in \cref{ss:non-mono}.
In \cref{p:extra} we also mention (without proof) an analogous result for the possible monotone lower spectra of uniform sets.

Finally, in \cref{s:ineq} we provide a self-contained discussion of the inequalities appearing in \cref{it:attain}, \cref{it:attain-u}, and \cref{ic:mono-class}, with a particular focus on geometric interpretation of the bounds.

\subsection{Further research}
To conclude the introduction, we suggest some directions for future research.

First, we pose a question concerning the simultaneous classification of the Assouad and lower spectra.
Let $d\in\N$ and $\varphi_{\mathrm{A}}\colon(0,1]\to[0,d]$.
We recall from \cite[Theorem~A]{zbl:1562.28062} that there exists a set $F\subset\R^d$ such that $(1-\theta)\dimAs\theta F = \varphi_{\mathrm{A}}(\theta)$ for all $\theta\in(0,1]$ if and only if $\varphi_{\mathrm{A}}$ is $d$-Lipschitz and for all $\lambda,\theta\in(0,1]$,
\begin{equation}\label{e:aq}
    \varphi_{\mathrm{A}}(\theta) + \theta\varphi_{\mathrm{A}}(\lambda) \geq \varphi_{\mathrm{A}}(\lambda\theta) \geq \varphi_{\mathrm{A}}(\theta).
\end{equation}
However, it turns out that if we know information about $\dimAs\theta F$ or $\dimLs\theta F$, then we can improve \cref{e:ql} and \cref{e:aq}.
It is not too difficult to show, for $\varphi_{\mathrm{L}}(\theta) \coloneqq (1-\theta)\dimLs\theta F$ and $\varphi_{\mathrm{A}}(\theta) \coloneqq (1-\theta)\dimAs\theta F$, that the following stronger inequalities hold for all $\lambda,\theta\in(0,1]$:
\begin{equation}\label{e:joint}
    \begin{gathered}
        \varphi_{\mathrm{L}}(\theta) \leq \varphi_{\mathrm{L}}(\lambda\theta) - \theta \varphi_{\mathrm{L}}(\lambda) \leq \varphi_{\mathrm{A}}(\theta),\\*
        \theta\varphi_{\mathrm{L}}(\lambda) \leq \varphi_{\mathrm{A}}(\lambda\theta) - \varphi_{\mathrm{A}}(\theta) \leq \theta\varphi_{\mathrm{A}}(\lambda).
    \end{gathered}
\end{equation}
For instance, the upper bound of the first equation in \cref{e:joint} is precisely the upper bound in the second part of \cite[Proposition~3.1]{zbl:1467.28001}.
We recover the original inequalities from \cref{e:joint} since $\varphi_{\mathrm{L}}(\theta) \geq 0$ and $\varphi_{\mathrm{A}}(\theta) \leq d(1-\theta)$ for all $\theta\in(0,1]$.
One might ask if there are no additional joint inequalities.
\begin{question}
    Let $d\in\N$ and suppose $\varphi_{\mathrm{L}},\varphi_{\mathrm{A}}\colon(0,1]\to[0,d]$ are such that $\varphi_{\mathrm{A}}$ is $d$-Lipschitz and $\varphi_{\mathrm{L}},\varphi_{\mathrm{A}}$ satisfy \cref{e:joint}.
    Does there exist a set $F\subset\R^d$ such that $(1-\theta)\dimLs\theta F = \varphi_{\mathrm{L}}(\theta)$ and $(1-\theta)\dimAs\theta F = \varphi_{\mathrm{A}}(\theta)$ for all $\theta\in(0,1]$?
\end{question}

One could also ask about the \emph{modified} versions of $\dimLs\theta$ and $\dimuLs\theta$, defined by
\begin{equation*}
	\dimMLs\theta X \coloneqq \sup\{\dimLs\theta Y : Y \subset X \}; \qquad \dimuMLs X \coloneqq \sup\{\dimuLs\theta Y : Y \subset X \}.
\end{equation*}
The motivation for these definitions is that, unlike $\dimLs\theta$ and $\dimuLs\theta$, the modified versions are monotone for subsets: if $Y \subset X$, then $\dim^{\theta}_{\mathrm{ML}} Y \leq \dim^{\theta}_{\mathrm{ML}} X$ and $\dimuMLs\theta Y \leq \dimuMLs\theta X$.
It is easy to see that if $X$ is uniform, then $\dimMLs\theta X = \dimLs\theta X$ and $\dimuMLs\theta X = \dimuLs\theta X$.
In particular, \cref{it:attain-u} shows that many functions can appear as the modified lower spectrum of subsets of $\R^d$.
However, the proof of \cref{it:attain} does not work for the modified lower spectrum since the sets $E_k$ in the construction may have larger lower spectrum than their union $E$.
This raises the following question.
\begin{question}
    Let $d\in\N$.
    What is the set of functions $\theta\mapsto\dimMLs\theta F$ for $F\subset\R^d$?
    What about $\theta\mapsto\dimuMLs\theta F$?
\end{question}

\section{Two-scale branching functions}\label{s:ts}
Throughout this section, $X$ is a non-empty metric space.
We formally introduce the infimal two-scale branching function and prove \cref{it:two-scale}.

\subsection{An infimal two-scale branching function}\label{ss:twoscale}
We begin with well-separated packings, which we find to be more convenient to work with than covers.
\begin{definition}
    For $E\subset X$ and $r > 0$, we let $P_r(E)$ denote the supremum over $k\in\N$ for which there exist points $\{x_1,\ldots,x_k\}\subset E$ such that $B(x_i, 2r) \cap B(x_j, 2r) = \varnothing$ for all $i \neq j$.
    We call such a collection $\{x_1,\ldots,x_k\}$ a \emph{(strong) $r$-packing} of $E$.
\end{definition}
Clearly if $E$ is non-empty, then $P_r(E)\geq 1$.
Up to a change of radius by a constant factor, packings are equivalent to covers.
For $E\subset X$, let $N_r(E)$ denote the least number of closed balls of radius $r$ required to cover $E$, in which case it is easy to check that
\begin{equation}\label{e:cov-eq}
    N_{4r}(E) \leq P_r(E) \leq N_r(E).
\end{equation}
We will see in a moment why we find it more convenient to use packings with additional separation instead of covers, or even regular packings.

We now introduce an appropriate two-scale branching function, which we call the \emph{(infimal) two-scale branching function}.
Let $\Delta = \{(u, v) \in \R^2: v \leq u\}$.
We let $\lb = \lb_X\colon \Delta \to [0, \infty]$ denote the function
\begin{equation*}
    \lb_X(u,v) = \log \inf_{x\in X}P_{2^{-u}}(B(x, 2^{-v})).
\end{equation*}
Here and elsewhere, $\log$ denotes the logarithm in base $2$.
The function $\lb$ satisfies some basic properties.
\begin{lemma}\label{l:superad}
    Let $X$ be a non-empty metric space.
    Then the following hold:
    \begin{enumerate}[nl,r]
        \item $\lb_X(u,u) = 0$ for all $u\in\R$.
        \item The function $\lb_X$ is \emph{superadditive}: for $v\leq w \leq u$,
            \begin{equation*}
                \lb_X(u,v) \geq \lb_X(u, w) + \lb_X(w, v).
            \end{equation*}
    \end{enumerate}
\end{lemma}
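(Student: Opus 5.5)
Part (i) is immediate from the definitions. For any $x\in X$ and $r>0$, the ball $B(x,r)$ is non-empty, so $P_r(B(x,r))\geq 1$. Conversely, if $y,z\in B(x,r)$ then $d(y,z)\leq 2r$. Hence the closed balls $B(y,2r)$ and $B(z,2r)$ intersect; for instance, each contains $x$, since $d(y,x)\leq r\leq 2r$. So no two distinct points of $B(x,r)$ can both belong to a strong $r$-packing, and $P_r(B(x,r)) = 1$. Taking $r=2^{-u}$ and the infimum over $x$ gives $\lb_X(u,u) = \log 1 = 0$.

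For part (ii), fix $v\leq w\leq u$, and write $R=2^{-v}$, $\rho = 2^{-w}$, $r=2^{-u}$, so that $r\leq\rho\leq R$. Set $a=\inf_{x}P_\rho(B(x,R))$ and $b = \inf_y P_r(B(y,\rho))$. If $a$ or $b$ is infinite we argue with arbitrary finite values below them; we may also assume both are finite and at least $1$. It suffices to show that $P_r(B(x,R))\geq ab$ for every $x\in X$, since taking the infimum over $x$ and then logarithms gives the claim.

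Fix $x$, and choose a strong $\rho$-packing $\{y_1,\ldots,y_a\}\subset B(x,R)$, so that the balls $B(y_i,2\rho)$ are pairwise disjoint. For each $i$, choose a strong $r$-packing $\{z_{i,1},\ldots,z_{i,b}\}\subset B(y_i,\rho)$. All of the points $z_{i,j}$ lie in $B(x,R)$, because they are points of $X$ inside $B(y_i,\rho)$ and we only need them in $B(x,R)$. This is the one subtle point: $B(y_i,\rho)\not\subset B(x,R)$ in general. I would therefore instead pick the $y_i$ from a packing of $B(x,R-\rho)$, or more simply note the definition of $P$ only requires the points to lie in $E$. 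The fix is to check the intended convention. The cleanest version is to observe that the superadditivity in the paper is stated with no error term, so presumably the intended ball bound is handled by the separation factor. I would verify this by replacing the inner packing in $B(y_i,\rho)$ by one in $B(y_i,\rho)\cap B(x,R)$, which needs care, and flag this as the main obstacle.

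It remains to verify that the combined family is a strong $r$-packing. Within a fixed $i$, the balls $B(z_{i,j},2r)$ are disjoint by construction. For $i\neq i'$, we have $B(z_{i,j},2r)\subset B(y_i,\rho+2r)$. Since $r\leq\rho$, one would want $\rho+2r\leq 2\rho$, which holds only when $r\leq \rho/2$. So the separation constant $4r$ in the definition of $P_r$ must be exploited. Precisely, disjointness of $B(y_i,2\rho)$ and $B(y_{i'},2\rho)$ in a metric space gives no distance bound directly. So I would instead use the paper's stated version, where distinct points are at distance $>4\rho$. Then $d(z_{i,j},z_{i',j'})> 4\rho-2\rho=2\rho\geq 2r$. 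For the required separation $>4r$, we need $2\rho\geq 4r$ in this case, which needs further work. I expect this bookkeeping of radii to be the delicate step, resolved by using the separation convention $>4r$ together with the triangle inequality. Counting then gives $ab$ points, as required.
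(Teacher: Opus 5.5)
Part (i) is fine. In part (ii), the radius bookkeeping you leave open is not a real obstacle, and the paper settles it with the case split you are circling around. If $u<w+1$, i.e.\ $\rho<2r$, then every point of $B(y,\rho)$ lies within $2r$ of $y$. So, exactly as in your part (i) argument, $P_r(B(y,\rho))=1$ for all $y$, hence $\lb_X(u,w)=0$, and the claim reduces to monotonicity in $u$. If $u\geq w+1$, i.e.\ $\rho\geq 2r$, your own inclusion $B(z_{i,j},2r)\subset B(y_i,\rho+2r)\subset B(y_i,2\rho)$ gives the cross-disjointness directly from the disjointness of the balls $B(y_i,2\rho)$. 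With the distance convention one gets $d(z_{i,j},z_{i',j'})>4\rho-2\rho\geq 4r$ instead.

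The containment problem you flag is a genuine gap that cannot be closed. The paper's proof simply asserts that the combined family is a packing of the ball of radius $2^{-v}$ about $x$, overlooking exactly this point, and (ii) is in fact false as stated.

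Let $X=[-15,-14]\cup[-9,-8]\cup[-\frac12,\frac12]\cup[8,9]\cup[14,15]\subset\R$ and $(u,w,v)=(4,0,-3)$, so $r=\frac{1}{16}$, $\rho=1$, $R=8$.
\begin{itemize}
\item Each ball $B(y,1)$ is the unit interval of $X$ containing $y$, so $\lb_X(4,0)=\log 4$.
\item Each $x\in X$ has a point of $X$ at distance in $(4,8]$. For the middle interval use the nearer of $\pm 8$; the intervals $[8,9]$ and $[14,15]$ serve each other, and symmetrically on the left. Hence $\lb_X(0,-3)\geq\log 2$.
\item But $B(0,8)=[-\frac12,\frac12]\cup\{-8,8\}$ has $P_{1/16}(B(0,8))=6$, so $\lb_X(4,-3)\leq\log 6<\log 8\leq\lb_X(4,0)+\lb_X(0,-3)$.
\end{itemize}
Rescaling $X$ by $\frac18$ gives the same failure at $(7,3,0)$.

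In particular, neither of your proposed repairs can succeed. Packing $B(x,R-\rho)$ at scale $\rho$ does not produce $\inf_x P_\rho(B(x,R))$ points; here $B(0,7)$ has $P_1=1$. Restricting the inner packings to $B(y_i,\rho)\cap B(x,R)$ destroys the lower bound $b$; here $B(8,1)\cap B(0,8)=\{8\}$.

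What your construction (and the paper's) actually proves is $P_r(B(x,R+\rho))\geq ab$. Since $R+\rho\leq 2R$, this gives $\lb_X(u,v-1)\geq\lb_X(u,w)+\lb_X(w,v)$. That is superadditivity up to a unit shift in $v$, an error of the kind absorbed by $\sim$, but not the exact inequality the lemma claims.
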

\begin{proof}
    The first property is immediate.
    We will see below that superadditivity follows by (and is our main justification for) our choice of $4r$-separated packings.

    Indeed, it suffices to consider the case that $\lb_X(u,w)$ and $\lb_X(w,v)$ are finite (or else, by monotonicity, $\lb_X(u,v) = \infty$).
    Moreover, if $w \leq u < w + 1$, then $P_{2^{-u}}(B(x, 2^{-w})) = 1$ for all $x\in X$ so $\lb_X(u, w) = 0$ and the inequality follows by monotoncity.
    It remains to check the case $u \geq w+1$.
    Let $\{x_1,\ldots,x_k\}$ be a maximal $2^{-w}$-packing of $B(x, 2^{-w})$.
    For each $x_i$, let $\{y_{i, 1},\ldots, y_{i, \ell}\}$ be a $2^{-u}$-packing of $B(x_j, 2^{-w})$ where $\ell = \inf_{y\in X}P_{2^{-u}}(y, 2^{-w})$.
    Consider points $y_{i, j}$ and $y_{i', j'}$.
    If $i = i'$, then $y_{i, j}$ and $y_{i, j'}$ are $2^{-u+2}$-separated for $j \neq j'$.
    Otherwise, if $i\neq i'$, since the points $x_j$ are $2^{-w+2}$-separated, the points $y_{i,j}$ and $y_{i', j'}$ are $2^{-w+1} \geq 2^{-u+2}$-separated.
    Then $\{y_{i, j}\}$ is a $2^{-u}$-packing of $B(x, 2^{-j})$ with cardinality $k \cdot \ell$ so
    \begin{equation*}
        P_{2^{-u}}(x, 2^{-v}) \geq P_{2^{-w}}(x, 2^{-v}) \cdot \inf_{y\in X}P_{2^{-u}}(y, 2^{-w}).
    \end{equation*}
    Taking an infimum over $x$ followed by a logarithm yields superadditivity.
\end{proof}

\subsection{Lipschitz properties}\label{ss:lip}
Unlike the two-scale branching function defined with a supremum, the function $\lb_X$ need not be Lipschitz in both parameters.
We recall the example from the introduction: consider the case that $X = [0,1/2]\cup\{1\}$ and let $u$ be large.
For $1 < v \leq u$, $B(2, 2^{-v}) = \{1\}$ so that $\lb_X(u, v) = 0$.
On the other hand, if $v \leq 0$, then $\lb_X(u, v) = u + O(1)$.
Therefore, we see that $\lb_X$ can be highly non-Lipschitz as a function of $v$.

Regardless, $\lb_X$ must be Lipschitz in an approximate sense as a function of $u$.
\begin{lemma}\label{l:approx-br}
    Suppose $X\subset\R^d$.
    Then for all $(u,v)\in\Delta$ and $z \geq 0$,
    \begin{equation*}
        0 \leq \lb_X(u + z, v) - \lb_X(u, v) \leq dz + O_d(1).
    \end{equation*}
\end{lemma}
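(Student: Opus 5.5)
The lower bound $0 \leq \lb_X(u+z,v)-\lb_X(u,v)$ is monotonicity. A $2^{-(u+z)}$-packing requires weaker separation than a $2^{-u}$-packing, so $P_{2^{-u-z}}(E)\geq P_{2^{-u}}(E)$ for every $E$. Taking the infimum over $x$ and then logarithms gives the inequality. Alternatively, it follows at once from superadditivity in \cref{l:superad} together with $\lb_X\geq 0$, which holds because $P_r(E)\geq 1$ for non-empty $E$.

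For the upper bound I would prove a pointwise comparison: for every $x\in X$,
\begin{equation*}
    P_{2^{-u-z}}(B(x,2^{-v})) \leq C_d\, 2^{dz}\, P_{2^{-u}}(B(x,2^{-v})).
\end{equation*}
Taking the infimum over $x$ and logarithms then gives the claim with $O_d(1)=\log C_d$.

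To prove the comparison, fix $x$ and set $E=B(x,2^{-v})$, $r=2^{-u}$, $s=2^{-u-z}\leq r$. Take a maximal $r$-packing $\{x_1,\dots,x_k\}$ of $E$, so $k\leq P_r(E)$. By maximality, every $y\in E$ satisfies $d(y,x_i)\leq 4r$ for some $i$, so $E\subset\bigcup_i B(x_i,4r)$. Any $s$-packing of $E$ then splits among these $k$ balls. The points in a single ball $B(x_i,4r)$ are $4s$-separated subsets of a Euclidean ball of radius $4r$. A standard volume argument in $\R^d$ (disjoint balls of radius $2s$ inside $B(x_i,4r+2s)$) bounds their number by $C_d (r/s)^d = C_d 2^{dz}$. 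Summing over $i$ gives $P_s(E)\leq k\,C_d 2^{dz}$, which is the desired comparison.

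The only point needing care is the volume count, and it is routine; the packing points may lie off $X$'s "nice" structure but they lie in $\R^d$, which is all that is needed. If some $P$ is infinite the inequality is trivial, although in $\R^d$ bounded sets have finite packings anyway. So I expect no real obstacle: the main step is just the covering-by-a-maximal-packing observation combined with Euclidean doubling.
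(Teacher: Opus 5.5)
Your proof is correct and is essentially the paper's argument. The paper chains, for a fixed ball $B(x,2^{-v})$,
\[
P_{2^{-(u+z)}} \leq N_{2^{-(u+z)}} \lesssim_d 2^{dz} N_{2^{-u}} \lesssim_d 2^{dz} N_{4\cdot 2^{-u}} \leq 2^{dz} P_{2^{-u}},
\]
using \cref{e:cov-eq} and Euclidean doubling. Your maximal-packing cover followed by a volume count inlines the step $N_{4r}\leq P_r$ and the doubling steps into a single estimate that uses only packings.
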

\begin{proof}
    The first inequality is just monotonicity in $u$.
    The other inequality follows from \cref{e:cov-eq}.
    Consider a fixed ball $B(x, 2^{-v})$.
    Then
    \begin{align*}
        P_{2^{-(u+z)}}(B(x, 2^{-v})) &\leq N_{2^{-(u+z)}}(B(x, 2^{-v}))\\
                                     &\lesssim_d 2^{dz} N_{2^{-u}}(B(x, 2^{-v}))\\
                                     &\lesssim_d 2^{dz} N_{4 \cdot 2^{-u}}(B(x, 2^{-v}))\\
                                     &\leq 2^{dz}P_{2^{-u}}(B(x, 2^{-v})).
    \end{align*}
    Taking an infimum over $x\in X$ and then a logarithm yields the claimed inequality.
\end{proof}
Now, recall the definition of $\mathcal{L}(\alpha)$ from \cref{d:L}.
We also let $\mathcal{L}$ denote those functions satisfying \cref{d:L}~\cref{i:1} and \cref{i:2}.
Clearly $\lb_X\in\mathcal{L}$, and in \cref{l:approx-br} we showed that $\lb_X$ is approximately a member of $\mathcal{L}(\alpha)$.

For the remainder of this section, we show that we can replace it with a function which is genuinely in $\mathcal{L}(\alpha)$, without changing the function too much.

The first useful technical tool is to observe that $\mathcal{L}(\alpha)$ is closed under pointwise infima.
\begin{proposition}\label{p:inf}
    Let $\alpha \geq 0$ and $\mathcal{F}\subset\mathcal{L}(\alpha)$.
    Then $\inf_{f\in\mathcal{F}} f \in \mathcal{L}(\alpha)$.
\end{proposition}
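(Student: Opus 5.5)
The plan is to set $g = \inf_{f\in\mathcal{F}} f$ and check the three conditions of \cref{d:L} one at a time. In each case the defining inequality holds for every member of $\mathcal{F}$, and it passes to the infimum because each inequality has $g$ appearing on the side where taking an infimum is harmless. We assume $\mathcal{F}$ is non-empty, since otherwise the infimum is identically $+\infty$ and the statement is vacuous or false by convention.

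First, every $f\in\mathcal{F}$ takes values in $[0,\infty)$. Hence $g$ also takes values in $[0,\infty)$: it is bounded below by $0$, and it is finite because $\mathcal{F}\neq\varnothing$. Condition \cref{i:1} is immediate, since $f(u,u)=0$ for every $f\in\mathcal{F}$ and so $g(u,u)=0$.

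For condition \cref{i:2}, fix $v\leq w\leq u$. For every $f\in\mathcal{F}$,
\begin{equation*}
    f(u,v) \geq f(u,w) + f(w,v) \geq g(u,w) + g(w,v).
\end{equation*}
The right-hand side does not depend on $f$, so taking the infimum over $f\in\mathcal{F}$ on the left gives $g(u,v)\geq g(u,w)+g(w,v)$. The point is that the infimum only appears on the larger side of the inequality.

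For condition \cref{i:3}, fix $v$ and $u,u'\geq v$. For every $f\in\mathcal{F}$,
\begin{equation*}
    g(u',v) \leq f(u',v) \leq f(u,v) + \alpha|u-u'|.
\end{equation*}
Taking the infimum over $f$ on the right gives $g(u',v)\leq g(u,v)+\alpha|u-u'|$. Exchanging the roles of $u$ and $u'$ then shows that $u\mapsto g(u,v)$ is $\alpha$-Lipschitz. Here finiteness of $g$ is used, so that we may subtract.

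There is no real obstacle in this argument. The only points needing care are the finiteness of $g$, which is where non-emptiness of $\mathcal{F}$ enters, and the observation that superadditivity is preserved under infima but would not in general be preserved under suprema.
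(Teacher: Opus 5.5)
Your proof is correct and is essentially the paper's argument. The paper checks the same three conditions of \cref{d:L}; it passes to the infimum by choosing an $\varepsilon$-near-minimizing $f\in\mathcal{F}$ rather than taking the infimum directly, and it proves only the one-sided bound $g(u,v)\leq g(w,v)+\alpha(u-w)$ for $w\leq u$, leaving the other side to monotonicity in $u$, whereas you obtain both sides by symmetry. Your observation that $\mathcal{F}$ must be non-empty for $g$ to be finite is a point the paper leaves implicit.
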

\begin{proof}
    Write $g =\inf_{f\in\mathcal{F}} f$.
    Clearly $g(u, u) = 0$ for all $u\in\R$.
    Throughout the proof, let $\varepsilon>0$ be arbitrary.

    To check superadditivity, let $v \leq u$ be arbitrary and get $f\in\mathcal{F}$ such that $f(u,v) \leq g(u,v) + \varepsilon$.
    Then,
    \begin{equation*}
        g(u,w) + g(w,v) \leq f(u,w) + f(w,v) \leq f(u, v) \leq g(u,v) + \varepsilon.
    \end{equation*}
    To check the Lipschitz property, let $v\leq w \leq u$ be arbitrary and get $f\in\mathcal{F}$ such that $f(w,v) \leq g(w,v) + \varepsilon$ so that
    Then
    \begin{equation*}
        g(u, v)\leq f(u, v) \leq f(w,v) + \alpha(u-w) \leq g(w, v) + \alpha(u-w) + \varepsilon
    \end{equation*}
    yielding the Lipschitz property.
\end{proof}
With the above result in mind, we can construct functions $f\in\mathcal{L}(\alpha)$ as an infimum over explicit functions which are easier to describe.
Let $z\in\R$ and suppose $g\colon\R\to\R$ is an increasing $\alpha$-Lipschitz function with $g(u) = 0$ for all $u\leq z$.
We define a function
\begin{equation*}
    h_{g, z}(u,v) =
    \begin{cases}
        g(u) - g(v) &: z \leq v,\\*
        \alpha(u-v) &: z > v.
    \end{cases}
\end{equation*}
These are the maximal functions in $\mathcal{L}(\alpha)$ which coincide with $g$ on the horizontal strip $\{(u,z): u \geq z\}\subset \Delta$.
\begin{lemma}\label{l:g-cmp}
    Let $z\in\R$ and suppose $g\colon\R\to\R$ is a non-negative increasing $\alpha$-Lipschitz function with $g(z) = 0$.
    Then $h_{g,z}\in\mathcal{L}(\alpha)$.
    Moreover if $f\in\mathcal{L}$ and $\eta\colon\R\to [0,\infty)$ are such that $f(u,z) - \eta(u) \leq g(u) \leq f(u,z)$ for all $u\geq z$, then
    \begin{equation*}
        h_{g,z}(u,v) \geq f(u,v) - \eta(u)
    \end{equation*}
    for all $(u,v)\in\Delta$.
\end{lemma}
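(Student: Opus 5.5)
The proof has two parts: membership of $h_{g,z}$ in $\mathcal{L}(\alpha)$ by a direct case check, and the comparison inequality via superadditivity of $f$ through the strip at height $z$. The comparison works for $v\geq z$, but for $v<z$ it does not follow from $f\in\mathcal{L}$ as literally stated; I give a counterexample below and what I would check in the paper's text.

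\emph{Membership.} Since $g$ is non-negative and increasing with $g(z)=0$, we get $g=0$ on $(-\infty,z]$. It follows that $h_{g,z}\geq 0$ and $h_{g,z}(u,u)=0$ in both branches. For fixed $v$, the map $u\mapsto h_{g,z}(u,v)$ is either $g(u)-g(v)$ or $\alpha(u-v)$, and both are $\alpha$-Lipschitz. For superadditivity, take $v\leq w\leq u$ and split according to the position of $z$.
\begin{itemize}
    \item If $z\leq v$, all three values come from the first branch and the sum telescopes to equality.
    \item If $w<z$, all three values come from the second branch and again we get equality.
    \item If $v<z\leq w$, then
    \begin{equation*}
        h_{g,z}(u,w)+h_{g,z}(w,v) = g(u)-g(w)+\alpha(w-v) \leq \alpha(u-w)+\alpha(w-v) = h_{g,z}(u,v),
    \end{equation*}
    using that $g$ is $\alpha$-Lipschitz.
\end{itemize}

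\emph{Comparison for $z\leq v\leq u$.} By superadditivity of $f$ applied to $z\leq v\leq u$,
\begin{equation*}
    f(u,z)\geq f(u,v)+f(v,z).
\end{equation*}
Combining this with the two hypotheses $f(u,z)\leq g(u)+\eta(u)$ and $g(v)\leq f(v,z)$ (valid since $v\geq z$) gives
\begin{equation*}
    f(u,v)\leq f(u,z)-f(v,z)\leq g(u)+\eta(u)-g(v).
\end{equation*}
This is exactly $h_{g,z}(u,v)\geq f(u,v)-\eta(u)$ in this range. Only $f\in\mathcal{L}$ and the hypotheses are used.

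\emph{Comparison for $v<z$: the main obstacle.} Here we need
\begin{equation*}
    f(u,v)\leq \alpha(u-v)+\eta(u).
\end{equation*}
The hypotheses only control $f$ on the strip $\{(u,z)\}$. Superadditivity through $z$ gives lower bounds on $f(u,v)$ for $v<z$, not upper bounds, and monotonicity bounds $f(u,v)$ from below by $f(u,z)$. So the hypotheses of the lemma do not constrain $f$ below the strip at all.

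In fact the inequality fails in this generality. Take $\alpha=1$, $z=0$, $f(u,v)=100(u-v)$ (which lies in $\mathcal{L}$), $g(u)=u_+$ and $\eta(u)=99u_+$. Then $f(u,0)-\eta(u)=g(u)=u\leq f(u,0)$ for $u\geq 0$, so the hypotheses hold. At $u=0$ and $v=-1$, the claim would require $1\geq 100$, which is false.

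Consequently, the only bound available for $v<z$ is the one I would expect the intended proof to use: since $f(v,v)=0$,
\begin{equation*}
    f(u,v)\leq f(v,v)+\alpha(u-v)=\alpha(u-v),
\end{equation*}
provided $u\mapsto f(u,v)$ is $\alpha$-Lipschitz, i.e.\ $f\in\mathcal{L}(\alpha)$. The same bound holds up to an additive error absorbed into $\eta$ if, as for $\lb_X$, $f$ is only approximately $\alpha$-Lipschitz by \cref{l:approx-br}.

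I would therefore prove the case $v\geq z$ exactly as stated. For $v<z$ I would check whether the paper's text supplies an extra Lipschitz hypothesis, or restricts the claim to the part of $\Delta$ above the strip. If it does neither, the counterexample shows that this case of the statement is false as worded, and the lemma must be repaired rather than proved.
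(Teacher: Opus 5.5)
Your proposal is correct and follows the paper's proof. The membership check and the case $z\le v$ (superadditivity through the strip at height $z$) match the paper, and your version of the mixed case, $g(u)-g(w)+\alpha(w-v)$, also corrects a sign slip in the paper's display. For $v<z$ the paper argues exactly $f(u,v)\le f(v,v)+\alpha(u-v)=\alpha(u-v)$, explicitly invoking that $f$ is $\alpha$-Lipschitz in the first variable; this is not part of $f\in\mathcal{L}$, so your counterexample is valid. In the application in \cref{p:repl}, the hypothesis $f(u,v)-f(w,v)\le\alpha(u-w)+\eta(u)$ with $w=v$ supplies precisely the needed bound $f(u,v)\le\alpha(u-v)+\eta(u)$ with the same $\eta$, so your proposed repair is the right one.
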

\begin{proof}
    We first check that $h_{g,z}\in\mathcal{L}(\alpha)$.

    Let $A_z = \{(u,v)\in\Delta: z \leq v\}$ and $B_z = \{(u,v)\in\Delta: z > v\}$.
    Clearly $h_{g,z}(u,u) = 0$ for all $u\in\R$, $u\mapsto h_{g,z}(u,v)$ is $\alpha$-Lipschitz, and $u\mapsto h_{g,z}(u,v)$ is increasing as a function of $u$.
    It remains to check superadditivity.
    Let $v \leq w \leq u$.
    If $z \leq v$ or $z > w$, then $(v,w)$ and $(w,u)$ both lie in either $A_z$ or $B_z$, in which case the inequality is an easy computation.
    In the remaining case, $v < z \leq w \leq u$ so that
    \begin{equation*}
        h_{g,z}(u,v) = \alpha(u-v) \geq \alpha(w-v) + g(w)- g(u) = h_{g,z}(u,w) + h_{g,z}(w,v).
    \end{equation*}
    Here, we used that $g$ is $\alpha$-Lipschitz so $g(w) - g(u) \leq \alpha(u-w)$.

    Finally, let $f\in\mathcal{L}$ be arbitrary and let $\eta$ be defined as in the statement.
    Let $(u,v)\in\Delta$ be arbitrary.
    If $z \leq v \leq u$, then
    \begin{equation*}
        f(u,v) \leq f(u,z) - f(v,z) \leq g(u) - g(v) + \eta(u) = h_{g,z}(u,v) + \eta(u).
    \end{equation*}
    Otherwise, $z > v$, in which case
    \begin{equation*}
        f(u,v) \leq \alpha(u-v) = h_{g,z}(u,v)
    \end{equation*}
    since $f(v,v) = 0$ and $f$ is $\alpha$-Lipschitz in the first variable.
\end{proof}
Using this lemma we can replace functions which are approximately $\alpha$-Lipschitz in the first variable with elements of $\mathcal{L}(\alpha)$.
\begin{proposition}\label{p:repl}
    Let $f\in\mathcal{L}$ and suppose $\eta\colon\R\to[0,\infty)$ is such that for all $v\leq w \leq u$,
    \begin{equation*}
        f(u,v) - f(w, v) \leq \alpha(u-w) + \eta(u).
    \end{equation*}
    Then there exists a function $g\in\mathcal{L}(\alpha)$ such that
    \begin{equation*}
        f(u,v) - \eta(u) \leq g(u,v) \leq f(u,v)
    \end{equation*}
    for all $(u,v)\in\Delta$.
\end{proposition}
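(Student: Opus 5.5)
The plan is to build $g$ as a pointwise infimum of functions of the form $h_{g_z,z}$ from \cref{l:g-cmp}, one for each $z\in\R$, and then invoke \cref{p:inf} to conclude that $g\in\mathcal{L}(\alpha)$.

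\emph{Step 1: the slice functions.} For each $z\in\R$, I regularize the horizontal slice $u\mapsto f(u,z)$ from below by an $\alpha$-Lipschitz envelope. Set $g_z(u)=0$ for $u\leq z$, and for $u\geq z$ set
\begin{equation*}
    g_z(u) = \inf_{z\leq w\leq u}\bigl(f(w,z) + \alpha(u-w)\bigr).
\end{equation*}
I will check the following properties of $g_z$.
\begin{itemize}
    \item $g_z(z) = f(z,z) = 0$, and taking $w=u$ gives $g_z(u)\leq f(u,z)$.
    \item $g_z\geq 0$, since $f\geq 0$.
    \item $g_z$ is increasing. Let $z\leq u<u'$ and take a competitor $w$ for $g_z(u')$. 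If $w\leq u$, its value $f(w,z)+\alpha(u'-w)$ dominates the competitor $f(w,z)+\alpha(u-w)$ for $g_z(u)$. If $u<w\leq u'$, its value is at least $f(w,z)\geq f(u,z)\geq g_z(u)$, because $f$ is increasing in its first variable (a consequence of \cref{e:mono}).
    \item $g_z$ is $\alpha$-Lipschitz. Every competitor for $g_z(u)$ yields a competitor for $g_z(u')$ at cost $+\alpha(u'-u)$, so $g_z(u')\leq g_z(u)+\alpha(u'-u)$. Combined with monotonicity this gives the Lipschitz bound.
    \item The lower bound $g_z(u)\geq f(u,z)-\eta(u)$ for $u\geq z$ follows directly from the hypothesis applied with $v=z$: for each $z\leq w\leq u$, $f(w,z)+\alpha(u-w)\geq f(u,z)-\eta(u)$.
\end{itemize}

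\emph{Step 2: apply \cref{l:g-cmp} and \cref{p:inf}.} By Step 1, $g_z$ satisfies all the hypotheses of \cref{l:g-cmp} with this $f$ and $\eta$. Hence $h_{g_z,z}\in\mathcal{L}(\alpha)$ and $h_{g_z,z}(u,v)\geq f(u,v)-\eta(u)$ on all of $\Delta$. Define
\begin{equation*}
    g=\inf_{z\in\R}h_{g_z,z}.
\end{equation*}
By \cref{p:inf}, $g\in\mathcal{L}(\alpha)$. The lower bound $g\geq f-\eta$ passes to the infimum, and $g\geq 0$ holds automatically because each $h_{g_z,z}$ is nonnegative.

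\emph{Step 3: the upper bound.} Given $(u,v)\in\Delta$, choose $z=v$. Since $z\leq v$, we are in the first case of the definition of $h$, so
\begin{equation*}
    g(u,v)\leq h_{g_v,v}(u,v)=g_v(u)-g_v(v)=g_v(u)\leq f(u,v).
\end{equation*}

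The only step that needs real care is Step 1, namely checking that the envelope $g_z$ is simultaneously increasing, $\alpha$-Lipschitz, and within $\eta(u)$ of $f(u,z)$. The monotonicity check is where the superadditivity of $f$, through its consequence that $f$ is increasing in $u$, is used. Everything else reduces to \cref{l:g-cmp} and \cref{p:inf}.
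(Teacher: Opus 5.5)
Your proof is correct and is essentially the paper's argument: your $g_z$ is exactly the paper's maximal increasing $\alpha$-Lipschitz minorant $\phi_z$ of $u\mapsto f(u,z)$, written explicitly as an inf-convolution, and $g=\inf_z h_{g_z,z}$ is the same function as the paper's. You also verify the properties of $g_z$ and the bound $g\le f$ (by taking $z=v$) more carefully than the paper does.

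One small point, which you share with the paper: on the region $v<z$, the second part of \cref{l:g-cmp} needs $f(u,v)\le\alpha(u-v)+\eta(u)$, and $f\in\mathcal{L}$ alone does not give this. It is, however, exactly your hypothesis with $w=v$, so the step goes through.
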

\begin{proof}
    First, fix a value $z\in\R$ and consider the function $f_z(u) = f(u, z)$.
    Observe that $f_z(u) = 0$, and let $\phi_z \leq f_z$ be the maximal increasing $\alpha$-Lipschitz function with $\phi_z(u) = 0$ for all $u \leq z$.
    Note for $z \leq w \leq u$ that
    \begin{equation*}
        f_z(w) \geq f_z(u) - \alpha(u - v) - \eta(u).
    \end{equation*}
    In particular, the non-negative increasing $\alpha$-Lipschitz function which takes value $\max\{0, f_z(u) - \eta(u)\}$ at $u$ and is otherwise as small as possible is bounded above by $f_z$.
    Therefore, for all $u\geq z$,
    \begin{equation*}
        f_z(u) - \eta(u)\leq \phi_z(u) \leq f_z(u).
    \end{equation*}
    Finally, set $g = \inf_{z\in\R} h_{\phi_z, z}\in\mathcal{L}(\alpha)$ by \cref{p:inf}.

    Clearly $g \leq f$ by construction.
    On the other hand, it follows from \cref{l:g-cmp} that $h_{\phi_z, z}(u,v) \geq f(u,v) - \eta(u)$ for all $(u,v)\in\Delta$.
    Therefore
    \begin{equation*}
        g(u,v) \geq f(u,v) - \eta(u)
    \end{equation*}
    as required.
\end{proof}
By combining \cref{l:approx-br} and \cref{p:repl}, we have proven the following.
\begin{restatementbase}[of~\cref{it:two-scale}~\cref{i:fL}]
    Let $X\subset\R^d$ be non-empty.
    Then there exists an $f\in\mathcal{L}(d)$ such that $f \sim_d \lb_X$.
\end{restatementbase}
\subsection{Attainable two-scale branching functions}\label{ss:attain-ts}
Now, we proceed with the proof of the second half of \cref{it:two-scale}.
Recall the equivalence relation introduced in \cref{d:csim} in the introduction.

We also introduce a definition highlighting the precise properties that we will require for an arrangement of sets in our construction.
\begin{definition}
    A \emph{geometric sequence} of compact sets is a sequence of sets $\{E_k\}_{k=0}^\infty$ for which there is a constant $m \geq 0$ and points $x_k\in E_k$ so that, for all integers $k \geq 0$,
    \begin{enumerate}[nl,r]
        \item $E_k \subset B(x_k, 2^{-k})$,
        \item $E_j \cap B(x_k, 2^{-k}) = \varnothing$ for all $j \neq k$,
        \item $E_k \subset B(0, 2^{-k + m})$.
    \end{enumerate}
\end{definition}
Given non-empty sets $\{F_k\}_{k=0}^\infty$ with $\diam F_k \leq 2^{-k}$, it is easy to see that there are translations $t_k\in\R^d$ so that $\{F_k + t_k\}_{k=0}^\infty$ is a geometric sequence.
\begin{figure}[t]
    \centering
    \begin{tikzpicture}[xscale = 5, bar/.style={line width=1pt, line cap=butt, shorten >=1pt}]
  \def\rs{0.4}
  \def\gs{0.5}
  \def\igap{0}

  \pgfmathsetmacro{\Etwo}{4*\rs+\gs}
  \pgfmathsetmacro{\Ethree}{8*\rs+2*\gs}

  \draw[bar] (0, 0) -- node[above]{$E_0$} (1, 0);

  \draw[bar] (0, {-\rs}) -- (0.5, {-\rs});
  \draw[bar] ({0.5}, {-\rs}) -- (1, {-\rs});

  \draw[bar] (0, {-2*\rs}) -- (0.25, {-2*\rs});
  \draw[bar] (0.5, {-2*\rs}) -- (0.75, {-2*\rs});

  \draw[bar] (0, {-3*\rs}) -- node[below]{$\vdots$} (0.125, {-3*\rs});
  \draw[bar] (0.125, {-3*\rs}) -- node[below]{$\vdots$} (0.25, {-3*\rs});
  \draw[bar] (0.5, {-3*\rs}) -- node[below]{$\vdots$} (0.625, {-3*\rs});
  \draw[bar] (0.625, {-3*\rs}) -- node[below]{$\vdots$} (0.75, {-3*\rs});

  \begin{scope}[xshift={1.5cm}]
    \draw[bar] (0, 0) -- node[above]{$E_1$} (0.5, 0);

    \draw[bar] (0.0, {-1*\rs}) -- (0.25, {-1*\rs});

    \draw[bar] (0.0, {-2*\rs}) -- (0.125, {-2*\rs});
    \draw[bar] (0.125, {-2*\rs}) -- (0.25, {-2*\rs});

    \draw[bar] (0.0, {-3*\rs}) -- node[below]{$\vdots$} ({1/16}, {-3*\rs});
    \draw[bar] ({1/8}, {-3*\rs}) -- node[below]{$\vdots$} ({1/8+1/16}, {-3*\rs});
  \end{scope}

  \begin{scope}[xshift={2.25cm}]
      \draw[bar] (0, 0) -- node[above]{$E_2$} (0.25, 0);

    \draw[bar] (0.0, {-1*\rs}) -- (0.125, {-1*\rs});
    \draw[bar] (0.125, {-1*\rs}) -- (0.25, {-1*\rs});

    \draw[bar] (0.0, {-2*\rs}) -- ({1/16}, {-2*\rs});
    \draw[bar] ({1/16}, {-2*\rs}) -- ({2/16}, {-2*\rs});
    \draw[bar] ({2/16}, {-2*\rs}) -- ({3/16}, {-2*\rs});
    \draw[bar] ({3/16}, {-2*\rs}) -- ({4/16}, {-2*\rs});

    \draw[bar] (0.0, {-3*\rs}) -- node[below]{$\vdots$} ({1/32}, {-3*\rs});
    \draw[bar] ({1/16}, {-3*\rs}) -- node[below]{$\vdots$} ({1/16 + 1/32}, {-3*\rs});
    \draw[bar] ({2/16}, {-3*\rs}) -- node[below]{$\vdots$} ({2/16 + 1/32}, {-3*\rs});
    \draw[bar] ({3/16}, {-3*\rs}) -- node[below]{$\vdots$} ({3/16 + 1/32}, {-3*\rs});
  \end{scope}

  \node at (2.6, 0) {$\hdots$};
\end{tikzpicture}
    \label{f:geom}
    \caption{A depiction of the set constructed in the proof of \cref{it:two-scale}~\cref{i:ext}.
        Each set $E_k$ is constructed as an intersection of nested families of dyadic cubes, following the procedure in \cref{ss:unif-ct}, with the subdivision sequence of each $E_k$ depending on $f_k$.
        The spacing between the consecutive sets is proportional to the diameter of $E_k$.
        The sets $E_k$ for $k \geq 3$ are omitted.
    }
\end{figure}
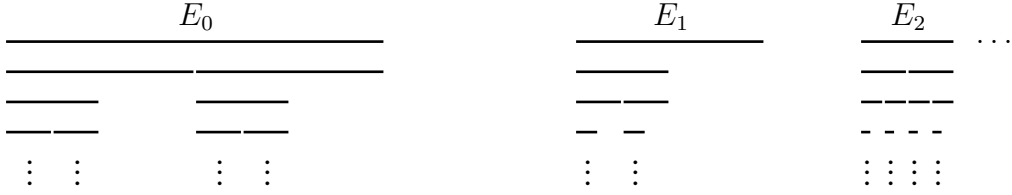
\begin{restatementbase}[of~\cref{it:two-scale}~\cref{i:ext}]
    Let $d\in\N$ and $f\in\mathcal{L}(d)$ be arbitrary.
    Then there exists a non-empty compact set $X\subset\R^d$ such that $f \sim_d \lb_X$.
\end{restatementbase}
\begin{proof}
    First, for each integer $k \geq 0$, let $f_k$ denote the function $f_k(u) = f(u,k)$ for $u \geq k$, and $f_k(u) = 0$ otherwise.
    Since $f\in\mathcal{L}(d)$, $f_k$ is an increasing $d$-Lipschitz function.
    Therefore, by \cref{p:unif-ct} combined with an appropriate translation, there exists a set $E_k\subset B(x_k, 2^{-k})$ with branching function
    \begin{equation*}
        \lb_{E_k}(u,v) = f_k(u) - f_k(v) + O(1)
    \end{equation*}
    such that $\{E_k\}_{k=0}^\infty$ is a geometric sequence with respect to the points $x_k$ and constant $m\geq 0$ depending only on $d$.
    Let
    \begin{equation*}
    		X \coloneqq \{0\}\cup \bigcup_{k=0}^\infty E_k.
    	\end{equation*}

    The upper bound is immediate: for each $k\geq 0$,
    \begin{equation*}
        \lb_{X}(u,k) \leq \log P_{2^{-u}}(X\cap B(x_k, 2^{-k})) = f_k(u) - f_k(k) + O(1) = f(u, k) + O(1).
    \end{equation*}
    For the lower bound, let $x\in X$ be arbitrary.
    First suppose either $x = 0$ or $x\in E_n$ for some $n \geq k$.
    Since $E_n\subset B(x, 2^{-n + m})\subset B(x, 2^{-k+m})$, it follows that $E_k \subset B(x, 2^{-k + m + 1})$ and therefore
    \begin{align*}
        \log P_{2^{-u}}(X\cap B(x, 2^{-k + m + 1})) &\geq \log P_{2^{-u}}(E_k\cap B(x, 2^{-k + m + 1}))\\
                                                    &= f_k(u) - f_k(k - m - 1) + O(1)\\
                                                    &= f(u, k) + O(1).
    \end{align*}
    Otherwise, $x\in E_n$ for some $0 \leq n\leq k$.
    Then using superadditivity of $f$,
    \begin{align*}
        \log P_{2^{-u}}(X\cap B(x, 2^{-k + m + 1})) &\geq \log P_{2^{-u}}(E_n\cap B(x, 2^{-k + m + 1}))\\
                                                    & = f_n(u) - f_n(k - m - 1) + O(1)\\
                                                    &= f(u, n) - f(k, n) + O(1)\\
                                                    &\geq f(u, k) + O(1).
    \end{align*}
    Since this handles all possible cases of $x\in X$, we have shown that
    \begin{equation*}
        \lb_X(u, k - m - 1) \geq f(u, k) + O(1)
    \end{equation*}
    for all integers $k \geq 0$.
    Combined with the upper bound, it follows that $\lb_X \sim f$, as required.
\end{proof}
\begin{remark}
    In the above proof, we construct a compact set since we are only interested in small scales (that is, the values of $\lb_X(u,v)$ for $0 \leq v \leq u$).
    If one is also interested in large scales, one can instead take a union $\bigcup_{k\in\Z} E_k$ where the sets $E_k$ for $k < 0$ are constructed starting at a dyadic cube of side-length $2^{-k}$ which is much larger than $1$.
    The resulting set will be unbounded, but the proof is the same.
\end{remark}

\section{Classification for the lower spectrum}\label{s:lower}
In the previous section, we proved the classification for the two-scale branching functions $\lb_X$.
In this section, we relate the function $\lb_X$ to the lower spectrum and prove \cref{it:attain} and \cref{ic:mono-class}.

\subsection{A normalized limit of branching functions}\label{ss:norm}
Let $f\colon \Delta \to [0,\infty]$ be arbitrary.
We let $\Lambda(f)$ denote the function defined for $\theta\in (0,1]$ by the rule
\begin{equation*}
    \Lambda(f)(\theta) = \liminf_{u\to\infty}\frac{f(u, \theta u)}{u}.
\end{equation*}
We will see that the range of $\Lambda$ restricted to $\mathcal{L}(\alpha)$ is the following space of functions.
\begin{definition}\label{d:H}
    Let $\alpha \geq 0$.
    We let $\mathcal{H}(\alpha)$ denote the functions $\varphi\colon (0,1]\to [0,\alpha]$ such that the following hold:
    \begin{enumerate}[nl,a]
        \item[(S)]\namedlabel{i:superadd}{(S)} \emph{Superadditive:} For all $\lambda,\theta\in(0,1]$,
            \begin{equation*}
                \varphi(\lambda\theta) \geq \varphi(\theta) + \theta\varphi(\lambda).
            \end{equation*}
        \item[(W)]\namedlabel{i:conv}{(W)} \emph{Weak Lipschitz:} For all $\lambda,\theta\in(0,1]$,
            \begin{equation*}
                \varphi(\lambda\theta) \leq (1-\theta) \alpha + \theta\varphi(\lambda).
            \end{equation*}
    \end{enumerate}
\end{definition}
Let us make some simple observations about the functions in $\mathcal{H}(\alpha)$.
\begin{enumerate}[nl]
    \item It follows from \ref{i:superadd} that each $\varphi\in\mathcal{H}(\alpha)$ is decreasing, since $\theta\varphi(\lambda) \geq 0$.
        In particular, we may define $\varphi(0) \coloneqq \lim_{\theta \to 0}\varphi(\theta)$.
        Observe that the inequalities \ref{i:superadd} and \ref{i:conv} are satisfied for all $\lambda,\theta\in[0,1]$ with this definition.
    \item It follows that $\varphi(1) = 0$ by taking $\theta = 1$ in \ref{i:superadd}.
    \item By \ref{i:conv} with $\lambda = 1$, $\varphi(\theta) \leq \alpha(1-\theta)$.
        In particular, \ref{i:conv} also implies that $\varphi(1) = 0$.
    \item One can interpret \ref{i:conv} as a weak type of convexity at $0$: for all $\lambda\in[0,1]$, the graph of $\varphi$ lies below the line segment connecting $(0,\alpha)$ and $(\lambda, \varphi(\lambda))$.
    \item Using the above observation, it follows that \ref{i:conv} holds if and only if the function
        \begin{equation*}
            \theta\mapsto \frac{\alpha - \varphi(\theta)}{\theta}
        \end{equation*}
        is decreasing.
\end{enumerate}
To motivate the space $\mathcal{H}(\alpha)$, we observe that it is precisely the class of normalized limits of elements of $\mathcal{L}(\alpha)$.
\begin{proposition}\label{l:L-in-H}
    Let $\alpha\geq 0$.
    Then $\Lambda(\mathcal{L}(\alpha)) = \mathcal{H}(\alpha)$.
\end{proposition}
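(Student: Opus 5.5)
The plan is to prove the two inclusions separately. The inclusion $\Lambda(\mathcal{L}(\alpha))\subset\mathcal{H}(\alpha)$ follows by passing the defining inequalities of $\mathcal{L}(\alpha)$ to the normalized limit. The reverse inclusion uses an explicit homogeneous function built from $\varphi$.

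\emph{Step 1: $\Lambda(\mathcal{L}(\alpha))\subset\mathcal{H}(\alpha)$.}
Fix $f\in\mathcal{L}(\alpha)$ and set $\varphi=\Lambda(f)$.

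Since $f(\theta u,\theta u)=0$ and $u\mapsto f(u,\theta u)$ is $\alpha$-Lipschitz, we get $0\le f(u,\theta u)\le \alpha(1-\theta)u$. Hence $\varphi$ takes values in $[0,\alpha]$.

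For \ref{i:superadd}, apply \cref{e:mono} with $v=\lambda\theta u\le w=\theta u\le u$:
\begin{equation*}
    \frac{f(u,\lambda\theta u)}{u}\ge \frac{f(u,\theta u)}{u}+\theta\,\frac{f(\theta u,\lambda(\theta u))}{\theta u}.
\end{equation*}
Take $\liminf_{u\to\infty}$, using that the liminf of a sum is at least the sum of the liminfs. Since $\theta u\to\infty$, the last term has liminf $\theta\varphi(\lambda)$.

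For \ref{i:conv}, the Lipschitz property in the first variable gives
\begin{equation*}
    f(u,\lambda\theta u)\le f(\theta u,\lambda\theta u)+\alpha(1-\theta)u.
\end{equation*}
Divide by $u$ and take $\liminf$. On the right, the liminf of a term plus a constant is the liminf of the term plus that constant, which gives $\varphi(\lambda\theta)\le\theta\varphi(\lambda)+(1-\theta)\alpha$.

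\emph{Step 2: $\mathcal{H}(\alpha)\subset\Lambda(\mathcal{L}(\alpha))$.}
Given $\varphi\in\mathcal{H}(\alpha)$, extend it to $\varphi(0)$ by the limit, as in the remarks after \cref{d:H}. Define
\begin{equation*}
    f(u,v)=\begin{cases} u\,\varphi(v/u) &: 0<v\le u,\\ \alpha(u-v) &: v\le 0.\end{cases}
\end{equation*}
Then $\Lambda(f)(\theta)=\varphi(\theta)$ exactly, since $f(u,\theta u)=u\varphi(\theta)$ for $u>0$. It remains to check that $f\in\mathcal{L}(\alpha)$.

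Condition (i) holds because $\varphi(1)=0$ for $u>0$, and trivially for $u\le 0$.

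For superadditivity with $0<v\le w\le u$, put $\theta=w/u$ and $\lambda=v/w$. Then \ref{i:superadd} gives exactly
\begin{equation*}
    u\varphi(v/u)\ge u\varphi(w/u)+w\varphi(v/w).
\end{equation*}
If $v\le w\le 0$, the function $\alpha(u-v)$ is additive. In the mixed case $v\le 0<w$, we need $\alpha(u-v)\ge f(u,w)+\alpha(w-v)$, that is, $f(u,w)\le\alpha(u-w)$. This follows from $\varphi(\theta)\le\alpha(1-\theta)$, which is \ref{i:conv} with $\lambda=1$.

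For the Lipschitz property with $v>0$ and $u'\ge u\ge v$, put $\theta=u/u'$ and $\lambda=v/u$. Then \ref{i:conv} gives exactly
\begin{equation*}
    u'\varphi(v/u')\le \alpha(u'-u)+u\varphi(v/u),
\end{equation*}
and monotonicity in $u$ follows from superadditivity together with $f\ge0$. For $v\le0$ the function $\alpha(u-v)$ is plainly $\alpha$-Lipschitz. So $f\in\mathcal{L}(\alpha)$ and $\Lambda(f)=\varphi$.

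I do not expect a serious obstacle. The points needing care are the liminf manipulations in Step 1, which use only the favourable directions (sum of liminfs, and additive constants), and the boundary region $v\le 0$ in Step 2. That region is handled by gluing in the maximal function $\alpha(u-v)$, in the spirit of $h_{g,z}$ from \cref{l:g-cmp}.
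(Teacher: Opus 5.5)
Your proof is correct and follows the paper's route: for one inclusion, pass superadditivity and the first-variable Lipschitz bound through the $\liminf$; for the other, take the homogeneous function $f(u,v)=u\varphi(v/u)$, with (S) and (W) giving superadditivity and the Lipschitz bound. Your gluing of $\alpha(u-v)$ on $\{v\le 0\}$ and your explicit check of monotonicity in $u$ are slightly more careful than the paper, whose formula $u\varphi(v/u)$ is not defined on all of $\Delta$ and which checks only one side of the Lipschitz inequality.
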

\begin{proof}
    First let $f\in\mathcal{L}(\alpha)$.
    We already saw earlier that $\varphi = \Lambda(f)$ is a function from $(0,1]$ to $[0,\alpha]$.
    Let us verify the inequalities.
    First, \ref{i:superadd} is a consequence of superadditivity of $f$:
    \begin{equation*}
        f_u(\lambda\theta) \geq f_u(\lambda) + \theta f_{\theta u}(\lambda)
    \end{equation*}
    and taking a limit infimum in $u$ yields the desired inequality.

    Next, \ref{i:conv} is a consequence of the horizontal Lipschitz property.
    Fix $\theta > 0$.
    Let $\varepsilon>0$ and, by definition of $\Lambda(f)$, get arbitrarily large $u>0$ so that
    \begin{equation*}
        f(\theta u, \lambda\theta u)\leq \theta u (\varphi(\lambda) + \varepsilon).
    \end{equation*}
    Then by the $\alpha$-Lipschitz property,
    \begin{equation*}
        f(u, \lambda\theta u) \leq \alpha(1-\theta)u + f(\theta u, \lambda\theta u) \leq \alpha(1-\theta)u + \theta\varphi(\lambda) + u\theta\varepsilon.
    \end{equation*}
    Since this holds for arbitrarily large $u$, dividing by $u$ and taking a limit infimum,
    \begin{equation*}
        \varphi(\lambda\theta) \leq (1-\theta)\alpha + \theta \varphi(\lambda) + \theta\varepsilon.
    \end{equation*}
    Since $\varepsilon>0$ was arbitrary, \ref{i:conv} follows.

    Conversely, let $\varphi\in\mathcal{H}(\alpha)$ and define $f\colon\Delta \to [0,\infty)$ by the rule $f(u,v) = u\varphi(v/u)$.
    We first check that $f\in\mathcal{L}(d)$.
    Let $0\leq v \leq w \leq u$ be arbitrary, let $\theta = w/u$, and let $\lambda = v/w$.
    Then using \ref{i:superadd},
    \begin{equation*}
        f(u,v) = u\varphi(\lambda \theta) \geq u\varphi(\theta) + u\theta\varphi(\lambda)) = f(u, w)  + f(w, v),
    \end{equation*}
    yielding superadditivity of $f$.
    To verify the horizontal Lipschitz property, we again compute by \ref{i:conv} that
    \begin{equation*}
        f(u, v) = u\varphi(\lambda \theta) \leq u\alpha(1-\theta) + u\theta \varphi(\lambda) = \alpha(u-w) + f(w,v)
    \end{equation*}
    which, after re-arranging, shows that $u\mapsto f(u,v)$ is $\alpha$-Lipschitz.
    Therefore, $\Lambda$ is surjective.
\end{proof}

\subsection{Continuity and Lipschitz properties}\label{ss:cont-lip}
For the Assouad spectrum, the corresponding space of functions is the set $\mathcal{G}(\alpha)$ of functions $\varphi\colon[0,1]\to [0,\alpha]$ which are $\alpha$-Lipschitz, decreasing, and such that for all $\lambda,\theta\in [0,1]$, the following analogue of \ref{i:superadd} holds:
\begin{equation*}
    \varphi(\lambda\theta) \leq \varphi(\theta) + \theta\varphi(\lambda).
\end{equation*}
When $\alpha = d$ is an integer, this is the family $\{\theta\mapsto (1-\theta)h(\theta) : h\in \mathcal{A}_d\}$ where $\mathcal{A}_d$ is defined in \cite{zbl:1562.28062}; and it is the same as the family $\mathcal{G}(\alpha)$ from \cite[Definition~1.2]{arxiv:2510.07013}.

A key difference in the setting of lower spectra is that the $\alpha$-Lipschitz property is replaced by property \ref{i:conv}.
We note that this property is strictly weaker; see \cref{ss:unif-attain} for more discussion.
For now, let us note that the best Lipschitz property which we can obtain is the following.
\begin{lemma}\label{l:cont}
    Let $\alpha\geq 0$ and $\varphi\in\mathcal{H}(\alpha)$.
    Then for all $\lambda > 0$, $\varphi$ is $\lambda^{-1}(\alpha - \varphi(\lambda))$-Lipschitz on the interval $[\lambda,1]$.
    In particular, $\varphi$ is continuous on $(0,1]$.
\end{lemma}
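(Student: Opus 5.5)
We combine the monotonicity coming from \ref{i:superadd} with the upper bound coming from \ref{i:conv}. Fix $\lambda\in(0,1]$ and let $\lambda\le s<t\le 1$; we must show $0\le\varphi(s)-\varphi(t)\le\lambda^{-1}(\alpha-\varphi(\lambda))(t-s)$.

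\emph{Lower bound.} The first inequality is the already-observed fact that $\varphi$ is decreasing. To see this directly, apply \ref{i:superadd} with $\theta=t$ and $\lambda'=s/t\in(0,1]$. This gives $\varphi(s)\ge\varphi(t)+t\varphi(s/t)\ge\varphi(t)$.

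\emph{Upper bound.} Apply \ref{i:conv} with $\theta=s/t$ and with $t$ in the role of $\lambda$:
\begin{equation*}
    \varphi(s)\le \Bigl(1-\frac{s}{t}\Bigr)\alpha+\frac{s}{t}\varphi(t).
\end{equation*}
Subtracting $\varphi(t)$ and simplifying gives
\begin{equation*}
    \varphi(s)-\varphi(t)\le \frac{t-s}{t}\bigl(\alpha-\varphi(t)\bigr).
\end{equation*}
Equivalently, this is the observation that $\theta\mapsto(\alpha-\varphi(\theta))/\theta$ is decreasing. The prefactor then needs a uniform bound for $t\in[\lambda,1]$: since this function is decreasing and $t\ge\lambda$, we have $(\alpha-\varphi(t))/t\le(\alpha-\varphi(\lambda))/\lambda$. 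Substituting yields exactly the Lipschitz constant $\lambda^{-1}(\alpha-\varphi(\lambda))$. This constant is non-negative because $\varphi\le\alpha$.

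\emph{Continuity.} Every point of $(0,1]$ lies in some interval $[\lambda,1]$ with $\lambda>0$. Since $\varphi$ is Lipschitz on each such interval, it is continuous on $(0,1]$.

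The only mild subtlety is the direction of the monotonicity of $(\alpha-\varphi(\theta))/\theta$. To avoid relying on it, I would rederive it inline: applying \ref{i:conv} to the pair $(\lambda,t/\lambda)$ gives $\alpha-\varphi(t)\le (t/\lambda)(\alpha-\varphi(\lambda))$, which is the bound needed.
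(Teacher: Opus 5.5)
Your proposal is correct and follows the paper's argument: (W) puts $\varphi$ below the chord from $(0,\alpha)$ to $(t,\varphi(t))$, the chord's slope $(\alpha-\varphi(t))/t$ is at most $(\alpha-\varphi(\lambda))/\lambda$ for $t\ge\lambda$, and the fact that $\varphi$ is decreasing handles the other side of the Lipschitz inequality. One small slip is in your closing inline rederivation. The inequality $\alpha-\varphi(t)\le (t/\lambda)(\alpha-\varphi(\lambda))$ is correct, but it comes from (W) with $t$ in the role of $\lambda$ and $\lambda/t$ in the role of $\theta$, which gives $\varphi(\lambda)\le(1-\lambda/t)\alpha+(\lambda/t)\varphi(t)$. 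It does not come from the pair $(\lambda,t/\lambda)$, since $t/\lambda\ge 1$ is not an admissible parameter.
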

\begin{proof}
    Fix $\theta\in[\lambda,1]$.
    By \ref{i:conv}, for all $0<\theta'\leq\theta$, $\varphi(\theta')$ is bounded above by the line segment connecting $(0, \alpha)$ and $(\theta, \varphi(\theta))$.
    The negative of the slope of this segment is
    \begin{equation*}
        \frac{\alpha - \varphi(\theta)}{\theta} \leq \frac{\alpha - \varphi(\lambda)}{\lambda}.
    \end{equation*}
    Since $\varphi$ is decreasing, the other side of the Lipschitz inequality is trivial, completing the proof.
\end{proof}
With \cref{l:cont} in mind, we embed $\mathcal{H}(\alpha)\subset C([0,1])$ where we define $\varphi(0) = \lim_{\theta \to 0}\varphi(\theta)$.
As discussed before, the limit exists since $\varphi$ is decreasing and bounded above by $\alpha$.

\subsection{Classification of lower spectra}\label{ss:equiv-l}
In this section, we complete the proof of \cref{it:attain}.

One direction of the proof requires showing that the lower spectrum of $X$ (after normalization by a factor $(1-\theta)$) is an element of $\mathcal{H}(\alpha)$.
In the introduction, we introduced the equivalence relation $\sim$ on functions in $\mathcal{L}$.
Below, we introduce a somewhat more general equivalence relation since it more precisely captures the type of errors which we are able to ignore.
\begin{definition}\label{d:eq}
    Let $f,g\colon\Delta\to[0,\infty)$ be arbitrary functions.
    We say that $f \asymp g$ if there is a function $\eta\colon[0,\infty)\to[0,\infty)$ with $\lim_{u\to\infty}u^{-1}\eta(u) = 0$ such that for all $0 \leq v \leq u$:
    \begin{itemize}[nl]
        \item $f(u, v+\eta(u)) -\eta(u) \leq g(u,v)$ whenever $v \leq u-\eta(u)$, and
        \item $g(u,v) \leq f(u, v-\eta(u)) + \eta(u)$ whenever $v \geq \eta(u)$.
    \end{itemize}
\end{definition}
\begin{example}
    If $f,g\in\mathcal{L}$ and $|f(u, k) - g(u, k)| \leq \eta(u)$ for all $0 \leq k \leq u$ with $k\in\Z$ and $\lim_{u\to\infty}u^{-1}\eta(u) = 0$, then $f\asymp g$ with function $1 + \eta(u)$.
\end{example}
We first observe that the normalized limit $\Lambda$ is unchanged by the above equivalence relation.
\begin{lemma}\label{l:L-eq}
    Suppose $f\in\mathcal{L}(\alpha)$ and $g\colon\Delta \to \R$ are such that $f\asymp g$.
    Then $\Lambda(f) = \Lambda(g)$.
\end{lemma}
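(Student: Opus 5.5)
We prove the two inequalities $\Lambda(g)\geq\Lambda(f)$ and $\Lambda(g)\leq\Lambda(f)$ separately, each from one bullet of \cref{d:eq}. In both cases we absorb the shift $\eta(u)$ in the second variable using the structure of $f\in\mathcal{L}(\alpha)$. Write $\varphi=\Lambda(f)$, which lies in $\mathcal{H}(\alpha)$ by \cref{l:L-in-H}, and fix $\theta\in(0,1)$. The case $\theta=1$ is trivial: $f(u,u)=0$, and both bullets of \cref{d:eq} force $g(u,u)\lesssim\eta(u)=o(u)$ and $g\geq 0$, so both normalized limits vanish there.

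The key tool is a continuity estimate for $f$ in the second variable, which replaces the missing vertical Lipschitz property. For $v\leq w$, monotonicity gives $f(u,w)\leq f(u,v)$. For the reverse direction, superadditivity gives
\begin{equation*}
    f(u,v)\leq f(u,w)+\bigl(f(u,v)-f(u,w)\bigr).
\end{equation*}
This is not helpful by itself. Instead we use superadditivity together with the horizontal $\alpha$-Lipschitz property in the first variable:
\begin{equation*}
    f(u,v)\leq f(u+(w-v),v)\leq \ldots
\end{equation*}
The cleaner route is to avoid any pointwise vertical estimate and use only the continuity of $\varphi$ from \cref{l:cont}. For any $\varepsilon>0$ and all large $u$, we have $\eta(u)\leq\varepsilon u$. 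By monotonicity of $f$ in $v$,
\begin{equation*}
    f(u,\theta u+\eta(u))\geq f(u,(\theta+\varepsilon)u), \qquad f(u,\theta u-\eta(u))\leq f(u,(\theta-\varepsilon)u).
\end{equation*}

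For the lower bound, the first bullet gives
\begin{equation*}
    g(u,\theta u)\geq f(u,(\theta+\varepsilon)u)-\varepsilon u
\end{equation*}
for large $u$ (valid once $\theta+\varepsilon<1$). Dividing by $u$ and taking $\liminf$ yields $\Lambda(g)(\theta)\geq\varphi(\theta+\varepsilon)-\varepsilon$. For the upper bound, the second bullet gives
\begin{equation*}
    g(u,\theta u)\leq f(u,(\theta-\varepsilon)u)+\varepsilon u,
\end{equation*}
so $\Lambda(g)(\theta)\leq\varphi(\theta-\varepsilon)+\varepsilon$. In this second step we must check that taking $\liminf$ of an upper bound is legitimate. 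It is, because the inequality holds for every large $u$, so $\liminf g\leq\liminf(f+\varepsilon u)$. Letting $\varepsilon\to0$ and using continuity of $\varphi$ on $(0,1]$ from \cref{l:cont} gives $\Lambda(g)(\theta)=\varphi(\theta)$.

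The main obstacle is precisely this continuity: $\Lambda(f)$ is defined by a $\liminf$, so evaluating it at the perturbed parameters $\theta\pm\varepsilon$ only makes sense through $\varphi$ itself, and $f$ has no vertical Lipschitz control. This is exactly why the hypothesis $f\in\mathcal{L}(\alpha)$ (rather than just $f\in\mathcal{L}$) is needed: it places $\varphi$ in $\mathcal{H}(\alpha)$, and property \ref{i:conv} supplies the continuity via \cref{l:cont}. No regularity of $g$ is required, since both comparisons are one-sided bounds on $g$ by $f$.
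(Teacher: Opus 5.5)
Your proof is correct and follows the paper's proof. You sandwich $g(u,\theta u)$ between $f(u,\theta u+\eta(u))-\eta(u)$ and $f(u,\theta u-\eta(u))+\eta(u)$ using the two bullets of \cref{d:eq}, then pass to the limit using continuity of $\varphi=\Lambda(f)$ from \cref{l:cont}. Fixing $\varepsilon$ and using monotonicity of $f$ in $v$, rather than working with the moving parameter $\eta(u)/u$ as the paper does, makes explicit a step the paper leaves implicit.

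Two small clean-ups. First, delete the abandoned attempts in your second paragraph; the inequality you attribute to superadditivity there is a tautology. Second, at $\theta=1$ only the second bullet applies, and it gives $g(u,u)\leq f(u,u-\eta(u))+\eta(u)\leq(\alpha+1)\eta(u)$ by the horizontal $\alpha$-Lipschitz property.
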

\begin{proof}
    Let $\eta\colon\R\to\R$ be the function implicit in the relation $f\asymp g$, and we recall that $\lim_{u\to\infty}u^{-1}\eta(u) = 0$.
    Also, let $\varphi = \Lambda(f) \in \mathcal{H}(\alpha)$.

    Fix $\theta\in(0,1]$ and write $\varepsilon(u) = u^{-1}\eta(u)$.
    Since $f\asymp g$, if $\theta < 1$, for all $u > 0$ sufficiently large
    \begin{equation*}
        f(u, (\theta + \varepsilon(u))u) - \eta(u) \leq g(u, \theta u) \leq f(u, (\theta - \varepsilon(u)) u) + \eta(u);
    \end{equation*}
    and for $\theta = 1$,
    \begin{equation*}
        0\leq g(u, u) \leq f(u, (1-\varepsilon(u))u) + \eta(u).
    \end{equation*}
    Since $\varphi$ is continuous by \cref{l:cont}, dividing by $u$ and taking a limit infimum gives that $\Lambda(g)(\theta) = \varphi(\theta)$.
\end{proof}
Now, recall that the usual definition of the lower spectrum from the introduction uses packings instead of covers.
We also recall the equivalence of packings and covers from \cref{e:cov-eq}.
We obtain the following alternative formula for $\dimLs\theta F$.

In the below statement, even though $\dimLs\theta F$ is \emph{a priori} not defined for $\theta = 1$, since $\dimLs\theta F \leq d$ for $F\subset\R^d$, it makes sense to write $(1-\theta)\dimLs\theta F$ to mean $0$ when $\theta = 1$.
\begin{corollary}\label{c:dimL-lim}
    Suppose $F\subset\R^d$.
    Then for all $\theta\in (0,1]$,
    \begin{equation*}
        (1-\theta)\dimLs\theta F = \Lambda(\lb_F).
    \end{equation*}
    In particular, $\theta\mapsto(1-\theta)\dimLs\theta F \in \mathcal{H}(d)$.
\end{corollary}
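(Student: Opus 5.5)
We first identify $(1-\theta)\dimLs\theta F$ with a liminf of $\lb_F$ along $v=\theta u$ (up to negligible errors). Then we transfer to a genuine element of $\mathcal{L}(d)$ and apply \cref{l:L-eq} and \cref{l:L-in-H}.

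The first step is to rewrite the definition. Fix $\theta\in(0,1)$, set $r=2^{-u}$, so that $r^\theta=2^{-\theta u}$ and $(r^\theta/r)^s=2^{(1-\theta)us}$. By definition, $s<\dimLs\theta F$ exactly when $\inf_x N_{2^{-u}}(B(x,2^{-\theta u}))\gtrsim 2^{(1-\theta)us}$ for all $u>0$. For balls of fixed radius this uniform-in-$u$ bound with a constant is equivalent to the asymptotic statement
\[
\liminf_{u\to\infty}\frac{\log \inf_x N_{2^{-u}}(B(x,2^{-\theta u}))}{u}\geq(1-\theta)s.
\]
Here small $u$ is harmless, since $N\geq1$ for nonempty balls and only finitely many scales in a compact range of $u$ matter up to a constant. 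We therefore get $(1-\theta)\dimLs\theta F=\liminf_u u^{-1}\log\inf_x N_{2^{-u}}(B(x,2^{-\theta u}))$. Note that $N_r$ in the definition is computed with balls in the ambient space restricted to $F$; any ambient-versus-intrinsic difference only changes radii by a factor $2$, which is absorbed below.

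Next we replace covers by packings using \cref{e:cov-eq}. We have $P_{2^{-u}}\leq N_{2^{-u}}\leq P_{2^{-u-2}}$, so the quantity above is squeezed between $\lb_F(u,\theta u)$ and $\lb_F(u+2,\theta u)$. By \cref{l:approx-br}, these differ by at most $2d+O_d(1)$, which is negligible after dividing by $u$. Hence $(1-\theta)\dimLs\theta F=\Lambda(\lb_F)(\theta)$ for $\theta\in(0,1)$.

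This step only uses the value at $v=\theta u$ exactly, so no continuity in $v$ is needed and the lack of Lipschitz control in $v$ is avoided. The main subtlety is checking that the constant $C$ in the definition (uniform over all $0<r<1$) really corresponds to a liminf. For $s'<s$ the bound holds for large $u$ and hence with some constant for all $u$; conversely, a bound with a constant $C$ gives the liminf bound directly.

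For $\theta=1$ we need $\Lambda(\lb_F)(1)=0$. This holds because $\lb_F(u,u)=0$.

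For the membership in $\mathcal{H}(d)$, take $f\in\mathcal{L}(d)$ with $f\sim_d\lb_F$ by \cref{it:two-scale}~\cref{i:fL}. A constant-$z$ relation $\sim$ is in particular a relation $\asymp$ with $\eta\equiv z$. By \cref{l:L-eq}, $\Lambda(\lb_F)=\Lambda(f)$, and this lies in $\mathcal{H}(d)$ by \cref{l:L-in-H}. Note that \cref{l:L-eq} applies with the roles $f\in\mathcal{L}(d)$ and $g=\lb_F$, which matches the order there (the relation $\sim$ as defined allows either orientation up to swapping the two bullet conditions). This also shows that $\Lambda(\lb_F)$ is continuous and takes finite values.
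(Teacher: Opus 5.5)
Your proof is correct, and it has the same overall structure as the paper's: you unpack the definition as a normalized $\liminf$ of $\gamma(u,\theta u)=\log\inf_{x}N_{2^{-u}}(F\cap B(x,2^{-\theta u}))$, pass from covers to packings, and get membership in $\mathcal{H}(d)$ from \cref{it:two-scale}~\cref{i:fL}, \cref{l:L-eq} and \cref{l:L-in-H}.

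The one genuine difference is how you identify $\Lambda(\gamma)$ with $\Lambda(\lb_F)$. The paper asserts $\gamma\asymp\lb_F$ from \cref{e:cov-eq} and chains this with $\lb_F\asymp f$ for some $f\in\mathcal{L}(d)$. It then applies \cref{l:L-eq}, which needs the continuity of $\Lambda(f)$ from \cref{l:cont} to absorb the $o(u)$ shifts in $v$.

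You instead sandwich $\lb_F(u,\theta u)\le\gamma(u,\theta u)\le\lb_F(u+2,\theta u)$ at the same point and invoke \cref{l:approx-br}. This gives $|\gamma-\lb_F|\le 2d+O_d(1)$ pointwise, so the two $\liminf$s agree trivially. Your route is more elementary. It also makes explicit a step the paper elides: \cref{e:cov-eq} only yields a shift in the radius variable $u$, whereas $\asymp$ allows shifts only in $v$, so \cref{l:approx-br} is implicitly needed there anyway. It further shows that the identity $(1-\theta)\dimLs\theta F=\Lambda(\lb_F)(\theta)$ needs no regularity input; \cref{it:two-scale}~\cref{i:fL} is used only for the $\mathcal{H}(d)$ conclusion.

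One small point: your phrase ``$s<\dimLs\theta F$ exactly when the bound holds'' is slightly off at $s=\dimLs\theta F$. This is harmless, because your later paragraph on the constant $C$ handles the supremum correctly.
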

\begin{proof}
    Define the function
    \begin{equation*}
        \gamma(u,v) = \log \inf_{x\in F} N_{2^{-u}}(F \cap B(x, 2^{-v})).
    \end{equation*}
    By \cref{e:cov-eq}, $\gamma\asymp \lb_F$, and by \cref{it:two-scale}~\cref{i:fL}, there is a function $f\in\mathcal{L}(d)$ such that $\lb_F \asymp f$.
    Therefore $\gamma\asymp f$ and thus
    \begin{equation*}
        \Lambda(\gamma) = \Lambda(f) = \Lambda(\lb_F)
    \end{equation*}
    by \cref{l:L-eq}.
    Moreover, $\Lambda(f)\in\mathcal{H}(d)$ by \cref{l:L-in-H}, and therefore $\Lambda(\lb_F)\in\mathcal{H}(d)$.

    It remains to show that $(1-\theta)\dimLs\theta F = \Lambda(\gamma)(\theta)$ for $\theta\in (0,1)$.
    Fix $\theta\in(0,1)$.
    Unpacking definitions, $\dimLs\theta F$ is equivalently the supremum over values $s\geq 0$ for which there exists a constant $m_s \geq 0$ such that for all $u\geq 0$,
    \begin{equation}\label{e:diml-equiv}
        \gamma(u, \theta u) \geq s(1-\theta) u - m_s.
    \end{equation}
    On one hand, if $s \geq 0$ and $m_s \geq 0$ satisfy \cref{e:diml-equiv}, dividing by $u$ and taking a limit infimum, it is immediate that $\Lambda(\gamma) \geq s$.
    On the other hand, suppose $t$ is such that for all $m \geq 0$ there is a $u_m\geq 0$ so that
    \begin{equation*}
        \gamma(u_m, \theta u_m) \leq t(1-\theta) u_m - m.
    \end{equation*}
    Since $\gamma$ is non-negative, we must have $\lim_{m\to\infty}u_m = \infty$ and therefore $\Lambda(\gamma) \leq t$.
    We conclude that $(1-\theta)\dimLs\theta F = \Lambda(\gamma)(\theta)$.
\end{proof}
We can now complete the proof of our main result.
\begin{restatement}{it:attain}
    Let $d\in\N$.
    Then there exists a set $F\subset\R^d$ such that $(1-\theta)\dimLs\theta F = \varphi(\theta)$ for all $\theta\in(0,1]$ if and only if $\varphi\in\mathcal{H}(d)$.
\end{restatement}
\begin{proof}
    Recall that we saw in \cref{c:dimL-lim} that $\theta\mapsto (1-\theta)\dimLs\theta F \in\mathcal{H}(d)$, so we proceed with the converse.
    Fix $\varphi\in\mathcal{H}(d)$, and using surjectivity of $\Lambda$ from \cref{l:L-in-H} followed by \cref{it:two-scale}~\cref{i:ext}, we get a set $F\subset\R^d$ such that $\Lambda(f) = \varphi$ and $f\asymp \lb_F$.
    By construction, $\Lambda(f) = \varphi$, and by \cref{l:L-eq}, $\Lambda(f) = \Lambda(\lb_F)$.
    Therefore $(1-\theta)\dimLs\theta F = \varphi(\theta)$ for all $\theta\in(0,1]$, as required.
\end{proof}
\begin{remark}
    The set $F$ constructed in \cref{it:attain} can in fact be taken to be a countable union of non-trivial closed intervals.
    To prove this, one can modify the function $f$ as follows.
    Let $(n_k)_{k=1}^\infty$ be any sequence such that $\lim_{k\to\infty}k/n_k = 0$.
    For each $d$-Lipschitz horizontal strip $f_k$, define a new $d$-Lipschitz function $g_k\in\mathcal{Y}(d)$ by the rule
    \begin{equation*}
        g_k(u) = \begin{cases}
            f_k(u) &: 0 \leq u \leq n_k\\
            f_k(n_k) + d(u - n_k) &: u \geq n_k
        \end{cases}
    \end{equation*}
    Since $g_k$ has constant slope $d$ on $[n_k,\infty)$, the corresponding set provided by \cref{ss:unif-ct} will be a finite union of intervals.
    The resulting function $g = \min_k g_k\in\mathcal{L}(d)$ satisfies $g \geq f$, and moreover if $\theta > 0$ is fixed, then $f\sim g$ on the set $\{(u,v): N \leq v\leq\theta u\}$ where $N$ is sufficiently large so that $k/n_k < \theta$ for all $k\geq N$.
    Therefore $\Lambda(g) = \Lambda(f)$, so applying (the proof of) \cref{it:two-scale}~\cref{i:ext}, the resulting set is a finite union of intervals and has the desired lower spectrum.
\end{remark}

\subsection{Monotone lower spectra}\label{ss:mono}
Recall that the monotone lower spectrum was defined in \cref{ss:add}, and that it is related to the usual spectrum by
\begin{equation}\label{e:vari2}
    \dimuLs\theta X = \inf_{0<\lambda\leq \theta}\dimLs\lambda X
\end{equation}
for $0<\theta < 1$.
This relationship allows us to provide a classification of the functions $\theta\mapsto (1-\theta)\dimuLs\theta X$, where $X\subset\R^d$.
The appropriate space of functions is the following.
\begin{definition}\label{d:M}
    Let $\alpha \geq 0$.
    We let $\mathcal{M}(\alpha)$ denote the functions $\varphi\colon(0,1]\to[0,\alpha]$ such that \ref{i:conv} holds and the following condition holds:
    \begin{enumerate}[nl,a]
        \item[(M)]\namedlabel{i:mono}{(M)} \emph{Monotone:} The function $\theta\mapsto \varphi(\theta)/(1-\theta)$ is decreasing.
    \end{enumerate}
\end{definition}
\begin{remark}\label{r:geom}
    Property \ref{i:mono} has a useful geometric interpretation: it says that for any $0<\theta \leq 1$, $\varphi$ is bounded above on $[\theta, 1]$ by the line segment joining $(\theta, \varphi(\theta))$ and $(1,0)$.

    We also recall the geometric interpretation of \ref{i:conv}: for any $0<\theta \leq 1$, $\varphi$ is bounded above on $[0,\theta]$ by the line segment joining $(0,\alpha)$ and $(\theta, \varphi(\theta))$.
\end{remark}
The main difference from the space $\mathcal{H}(\alpha)$ is that the superadditivity assumption \ref{i:superadd} is replaced by the monotonicity assumption \ref{i:mono}.
We observe that the monotonicity assumption is stronger.
\begin{lemma}
    Let $\alpha \geq 0$.
    Then $\mathcal{M}(\alpha)\subset\mathcal{H}(\alpha)$ and the inclusion is strict for $\alpha > 0$.
\end{lemma}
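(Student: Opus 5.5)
We must show two things: every $\varphi\in\mathcal{M}(\alpha)$ satisfies \ref{i:superadd}, and for $\alpha>0$ some $\varphi\in\mathcal{H}(\alpha)$ fails \ref{i:mono}. Since both spaces require \ref{i:conv}, only these two points need proof.

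\emph{Inclusion.} Fix $\varphi\in\mathcal{M}(\alpha)$ and $\lambda,\theta\in(0,1]$; we may assume $\lambda,\theta<1$, as the cases $\lambda=1$ or $\theta=1$ follow from $\varphi(1)=0$. Since $\lambda\theta\le\theta$, property \ref{i:mono} gives $\varphi(\theta)\le\frac{1-\theta}{1-\lambda\theta}\varphi(\lambda\theta)$. Since $\lambda\theta\le\lambda$, it gives $\varphi(\lambda)\le\frac{1-\lambda}{1-\lambda\theta}\varphi(\lambda\theta)$. Adding the first inequality to $\theta$ times the second yields
\begin{equation*}
    \varphi(\theta)+\theta\varphi(\lambda)\le\frac{(1-\theta)+\theta(1-\lambda)}{1-\lambda\theta}\varphi(\lambda\theta)=\varphi(\lambda\theta),
\end{equation*}
which is \ref{i:superadd}. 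So the inclusion is a two-line computation; the only care needed is with the endpoint $\theta=1$, where \ref{i:mono} involves division by zero.

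\emph{Strictness.} We need some $\varphi\in\mathcal{H}(\alpha)$ for which $\varphi(\theta)/(1-\theta)$ increases somewhere. Since $\varphi(1)=0$, condition \ref{i:mono} forces $\varphi(\theta)\le \varphi(\theta_0)\frac{1-\theta}{1-\theta_0}$ for $\theta\ge\theta_0$. So we want $\varphi$ to be concave-like near $1$, i.e.\ to stay large near $1$ and then drop. A natural candidate comes from \cref{l:L-in-H}: take $f\in\mathcal{L}(\alpha)$ of the form $h_{g,z}$, or an infimum of such functions, and set $\varphi=\Lambda(f)$.

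Concretely, we try $f(u,v)=\min\{\alpha(u-v),\,\beta u\}$ for some $0<\beta<\alpha$, possibly with a threshold $v\ge$ something. We then compute $\Lambda(f)(\theta)=\min\{\alpha(1-\theta),\beta\}$, which is constant equal to $\beta$ on $[0,1-\beta/\alpha]$. On that interval $\varphi(\theta)/(1-\theta)=\beta/(1-\theta)$ is strictly increasing, so \ref{i:mono} fails.

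The main obstacle is checking that this $f$ is superadditive, i.e.\ that $\varphi=\min\{\alpha(1-\theta),\beta\}$ satisfies \ref{i:superadd}. We test this directly: $\varphi(\lambda\theta)\ge\varphi(\theta)+\theta\varphi(\lambda)$. In the case where $\lambda\theta$ is small, so that $\varphi(\lambda\theta)=\beta$, the right-hand side can be as large as $\beta+\theta\beta>\beta$, so superadditivity fails. This candidate is therefore wrong.

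Instead we look for $\varphi$ that is convex in $\log$-scale but not in the $(1-\theta)$ sense. The first thing to try is $\varphi(\theta)=c(1-\theta)^2$ with small $c>0$. For \ref{i:superadd} we compute
\begin{equation*}
    \varphi(\lambda\theta)-\varphi(\theta)-\theta\varphi(\lambda)=c\bigl[(1-\lambda\theta)^2-(1-\theta)^2-\theta(1-\lambda)^2\bigr].
\end{equation*}
Expanding, the bracket equals $\lambda^2\theta^2-2\lambda\theta+2\theta-\theta^2-\theta+2\lambda\theta-\theta\lambda^2=\theta(1-\theta)(1-\lambda^2)\ge0$. For \ref{i:conv}, the quantity $(\alpha-c(1-\theta)^2)/\theta$ must be decreasing. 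Its derivative has numerator $2c(1-\theta)\theta-\alpha+c(1-\theta)^2=c(1-\theta^2)-\alpha\le c-\alpha<0$ for $c<\alpha$. Finally, $\varphi/(1-\theta)=c(1-\theta)$ is decreasing, so this example lies in $\mathcal{M}$ and does not help. We therefore need a genuinely non-monotone example, in the spirit of the one promised in \cref{ss:non-mono}. We would next try $\varphi(\theta)=c\,\theta(1-\theta)$-type or piecewise examples. For instance, we might take $\varphi=\Lambda(f)$ with $f$ built from a single strip $h_{g,z}$ with $g$ having slope $0$ then $\alpha$; we expect the key work is verifying \ref{i:superadd} for the chosen formula, which is automatic if $\varphi$ is obtained as $\Lambda$ of an explicit element of $\mathcal{L}(\alpha)$ via \cref{l:L-in-H}.
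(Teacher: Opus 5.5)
Your proof of the inclusion $\mathcal{M}(\alpha)\subset\mathcal{H}(\alpha)$ is correct and is the paper's argument: both apply (M) to the pairs $\lambda\theta\le\theta$ and $\lambda\theta\le\lambda$. The paper chains the two resulting bounds as slope inequalities, while you add them with weight $\theta$. The strictness half, however, is not proved. You never exhibit an element of $\mathcal{H}(\alpha)\setminus\mathcal{M}(\alpha)$, and every candidate you mention fails:
\begin{itemize}
    \item $\min\{\alpha(1-\theta),\beta\}$ violates (S), as you found;
    \item $c(1-\theta)^2$ lies in $\mathcal{M}(\alpha)$;
    \item $c\,\theta(1-\theta)$ is increasing near $0$, whereas (S) forces $\varphi(\lambda\theta)\ge\varphi(\theta)$;
    \item a single strip $h_{g,z}$ with $g$ of slope $0$ and then $\alpha$ gives $\Lambda(h_{g,z})(\theta)=\alpha(1-\theta)$, which is in $\mathcal{M}(\alpha)$.
\end{itemize}
Invoking \cref{l:L-in-H} only moves the problem elsewhere. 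You would still need an explicit $f$, a computation of $\Lambda(f)$, and a proof that (M) fails for it.

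The missing idea is that your heuristic (``concave-like near $1$'') points the wrong way. Write (S) as $\frac{\varphi(\lambda\theta)-\varphi(\theta)}{\theta(1-\lambda)}\ge\frac{\varphi(\lambda)}{1-\lambda}$. This says the secant of $\varphi$ over every rescaled copy $[\lambda\theta,\theta]$ of $[\lambda,1]$ is at least as steep as the secant over $[\lambda,1]$. Hence $\varphi$ is flattest next to $1$; if it were strictly concave near $1$, secant monotonicity would give the reverse strict inequality. So (M) must instead fail at an interior flat stretch that lies to the left of a steeper stretch, where the flat stretch is a rescaled copy of a flat stretch ending at $1$. This is exactly the example $q_{\bm{a}}$ of \cref{ss:non-mono}, which the paper cites for strictness. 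A concrete instance ($\alpha=1$, $a_1=1/4$, $a_2=1/2$, $\kappa=0$) is
\begin{equation*}
    \varphi(\theta)=\begin{cases}
        1-4\theta &: 0<\theta\le 1/8,\\
        1/2 &: 1/8\le\theta\le 1/4,\\
        1-2\theta &: 1/4\le\theta\le 1/2,\\
        0 &: 1/2\le\theta\le 1.
    \end{cases}
\end{equation*}
For this $\varphi$:
\begin{itemize}
    \item (W) holds, since $(1-\varphi(\theta))/\theta$ equals $4$, then $1/(2\theta)$, then $2$, then $1/\theta$, and is therefore decreasing;
    \item (S) is a finite case check according to which pieces contain $\lambda$, $\theta$ and $\lambda\theta$;
    \item (M) fails, since $\varphi(\theta)/(1-\theta)=1/(2(1-\theta))$ is strictly increasing on $[1/8,1/4]$.
\end{itemize}
Since (S), (M), and (W) with constant $\alpha$ in place of $1$ are all preserved by multiplying by $\alpha$, $\alpha\varphi\in\mathcal{H}(\alpha)\setminus\mathcal{M}(\alpha)$ for every $\alpha>0$.
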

\begin{proof}
    Let $\lambda,\theta\in(0,1]$.
    By \ref{i:mono},
    \begin{equation*}
        \varphi(\theta) \leq \frac{\varphi(\lambda\theta)}{1-\lambda\theta}(1-\theta)
    \end{equation*}
    which, after rearranging and again applying \ref{i:mono}, yields
    \begin{equation*}
        \frac{\varphi(\lambda\theta) - \varphi(\theta)}{\theta(1 - \lambda)} \geq \frac{\varphi(\lambda\theta)}{1-\lambda\theta} \geq \frac{\varphi(\lambda)}{1-\lambda}.
    \end{equation*}
    Rearranging, we obtain the required inequality \ref{i:superadd}.

    To see that the inclusion is strict for $\alpha > 0$, see the example constructed in \cref{ss:non-mono} below.
\end{proof}
\begin{remark}
    There is a natural geometric interpretation of the above computation.
    Rearranging \ref{i:superadd},
    \begin{equation*}
        \frac{\varphi(\lambda\theta) - \varphi(\theta)}{\theta - \lambda\theta} \geq \frac{\varphi(\lambda)}{1-\lambda}.
    \end{equation*}
    The left hand side is the negative of the slope of the line segment from $(\lambda\theta, \varphi(\lambda\theta))$ to $(\theta, \varphi(\theta))$, and the right hand side is the negative of the slope of the line segment from $(\lambda, \varphi(\lambda))$ to $(1, 0)$.
    Recalling \cref{r:geom}, we see that the monotonicity assumption \ref{i:mono} is stronger.
\end{remark}
As the final component of our classification, we introduce a special class of functions using which we can use to give an alternative definition of $\mathcal{M}(\alpha)$.
Given $\alpha \geq 0$, $\lambda\in(0,1]$, and $0 \leq t \leq \alpha(1-\lambda)$, we define the function
\begin{equation*}
    \phi_{\alpha, \lambda, t}(\theta) = \begin{cases}
        \alpha + \frac{\theta}{\lambda}(t-\alpha) &: 0 < \theta \leq \lambda,\\*
        t\frac{1-\theta}{1-\lambda} &: \lambda \leq \theta \leq 1.
    \end{cases}
\end{equation*}
This is the function with two affine parts connecting the points $(0, \alpha)$, $(\lambda, t)$, and $(1, 0)$.
The constraint on $t$ ensures that the negative of the slope of $\phi_{\alpha, \lambda, t}$ on $[0,\lambda]$ is greater than or equal to the negative of the slope on $[\lambda, 1]$.
In particular, with \cref{r:geom} in mind, the following lemma is immediate.
\begin{lemma}\label{l:hM}
    Let $\alpha \geq 0$.
    Then $\phi_{\alpha, \lambda, t}\in\mathcal{M}(\alpha)$ for all $\lambda\in(0,1]$ and $0 \leq t \leq \alpha(1-\lambda)$.
    Moreover, $\varphi\in\mathcal{M}(\alpha)$ if and only if for all $\lambda\in(0,1]$, $\varphi \leq \phi_{\alpha, \lambda, \varphi(\lambda)}$.
\end{lemma}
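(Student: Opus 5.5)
The plan is to lean on the geometric interpretations recorded in \cref{r:geom}, treating the two claims in turn.

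\emph{Membership $\phi_{\alpha,\lambda,t}\in\mathcal{M}(\alpha)$.} The range condition is immediate: $\phi_{\alpha,\lambda,t}$ is piecewise affine with values between $0$ and $\alpha$, because $0\le t\le\alpha(1-\lambda)\le\alpha$.

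For \ref{i:mono}, the key observation is that $\phi/(1-\theta)$ is constant $t/(1-\lambda)$ on $[\lambda,1)$. On $(0,\lambda]$ write $\phi(\theta)=\alpha-s\theta$ with $s=(\alpha-t)/\lambda$. Then
\begin{equation*}
    \frac{d}{d\theta}\frac{\alpha-s\theta}{1-\theta}=\frac{\alpha-s}{(1-\theta)^2}.
\end{equation*}
This is $\le 0$ exactly when $s\ge\alpha$, i.e.\ $\alpha-t\ge\alpha\lambda$, i.e.\ $t\le\alpha(1-\lambda)$, which is the hypothesis. The two pieces agree at $\lambda$, so the ratio is decreasing on all of $(0,1)$.

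For \ref{i:conv}, I use the equivalent form that $(\alpha-\phi(\theta))/\theta$ is decreasing. On $(0,\lambda]$ this quotient is the constant $s$. On $[\lambda,1]$, $\alpha-\phi$ is affine with value $\alpha-t\ge0$ at $\lambda$ and value $\alpha$ at $1$. Writing $\alpha-\phi(\theta)=a+b\theta$ there, the quotient is $a/\theta+b$. It decreases iff $a\ge0$, and $a\ge0$ is equivalent to the line through $(\lambda,\alpha-t)$ and $(1,\alpha)$ having nonnegative intercept. That is again $t\le\alpha(1-\lambda)$. Continuity at $\lambda$ then joins the two pieces.

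\emph{The characterization.} For the ``if'' direction, suppose $\varphi\le\phi_{\alpha,\lambda,\varphi(\lambda)}$ for every $\lambda$. Fix $\lambda$ and apply this inequality on each side of $\lambda$:
\begin{itemize}
    \item On $[\lambda,1]$ it says $\varphi$ lies below the segment from $(\lambda,\varphi(\lambda))$ to $(1,0)$. This is precisely \ref{i:mono}, since $\varphi(\theta)/(1-\theta)\le\varphi(\lambda)/(1-\lambda)$ for $\theta\ge\lambda$.
    \item On $(0,\lambda]$ it says $\varphi$ lies below the segment from $(0,\alpha)$ to $(\lambda,\varphi(\lambda))$. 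This is \ref{i:conv} by \cref{r:geom}.
\end{itemize}
Note that $\phi_{\alpha,\lambda,\varphi(\lambda)}$ is only defined when $\varphi(\lambda)\le\alpha(1-\lambda)$, so this hypothesis is implicit in the statement.

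For the ``only if'' direction, let $\varphi\in\mathcal{M}(\alpha)$. Taking $\lambda=1$ in \ref{i:conv} gives $\varphi(\theta)\le\alpha(1-\theta)$, so $t=\varphi(\lambda)$ is an admissible parameter. The two geometric bounds of \cref{r:geom} then give $\varphi\le\phi_{\alpha,\lambda,\varphi(\lambda)}$ on $[\lambda,1]$ and on $(0,\lambda]$ respectively.

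\emph{Main obstacle.} The only genuine computation is verifying \ref{i:mono} and \ref{i:conv} for $\phi_{\alpha,\lambda,t}$ across the kink at $\lambda$. Both reduce to the constraint $t\le\alpha(1-\lambda)$, which was chosen exactly for this purpose.
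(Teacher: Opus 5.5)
Your proposal is correct and is essentially the paper's argument. The paper declares the lemma immediate from the geometric readings of \ref{i:mono} and \ref{i:conv} in \cref{r:geom}, together with the remark that $t\le\alpha(1-\lambda)$ makes $\phi_{\alpha,\lambda,t}$ at least as steep on $[0,\lambda]$ as on $[\lambda,1]$, and your derivative computations just make that verification explicit. One small point: \ref{i:conv} with $\lambda=1$ only gives $\varphi(\theta)\le(1-\theta)\alpha+\theta\varphi(1)$, so concluding $\varphi(\lambda)\le\alpha(1-\lambda)$ needs $\varphi(1)=0$, which comes from reading \ref{i:mono} as in \cref{r:geom} (the bounding segment ends at $(1,0)$), not from \ref{i:conv} alone.
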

We can now state and prove our main result for the monotone lower spectrum.
In particular, this implies \cref{ic:mono-class}.
\begin{theorem}\label{t:mono-class}
    Let $d\in\N$ and $\varphi\colon[0,1]\to[0,d]$.
    Then the following are equivalent.
    \begin{enumerate}[nl,r]
        \item\label{i:in-M} $\varphi\in\mathcal{M}(d)$.
        \item\label{i:dim} There exists $F\subset\R^d$ such that $(1-\theta)\dimuLs\theta F = \varphi(\theta)$.
        \item\label{i:inf} $\varphi$ is an infimum of a family of functions of the form $\phi_{d, \lambda, t}$ for $\lambda\in(0,1]$ and $0\leq t \leq \alpha(1-\lambda)$.
    \end{enumerate}
\end{theorem}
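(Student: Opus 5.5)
The plan is to prove the cycle of implications (i)$\Rightarrow$(iii)$\Rightarrow$(ii)$\Rightarrow$(i), using \cref{e:vari2} to transfer everything to the ordinary lower spectrum. There \cref{it:attain} and \cref{c:dimL-lim} give the full classification.

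\textbf{(i)$\Rightarrow$(iii).} This is immediate from \cref{l:hM}. If $\varphi\in\mathcal{M}(d)$, then $\varphi\le\phi_{d,\lambda,\varphi(\lambda)}$ for every $\lambda$, and equality holds at $\theta=\lambda$. Hence $\varphi=\inf_{\lambda}\phi_{d,\lambda,\varphi(\lambda)}$. Each of these functions is admissible, since $\varphi(\lambda)\le d(1-\lambda)$ by \ref{i:conv} with $\lambda=1$.

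\textbf{(iii)$\Rightarrow$(ii).} This is the constructive step. Let $\varphi=\inf_{i\in I}\phi_{d,\lambda_i,t_i}$. Each $\phi_{d,\lambda,t}$ lies in $\mathcal{M}(d)\subset\mathcal{H}(d)$, so by \cref{it:attain} there is a set $F_i\subset\R^d$ with $(1-\theta)\dimLs\theta F_i=\phi_{d,\lambda_i,t_i}(\theta)$. It may be cleaner to pass through branching functions: by \cref{l:L-in-H} there are $f_i(u,v)=u\phi_i(v/u)\in\mathcal{L}(d)$, and $f=\inf_i f_i\in\mathcal{L}(d)$ by \cref{p:inf}. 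Applying \cref{it:two-scale}~\cref{i:ext} gives $F$ with $f\sim_d\lb_F$.

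The main obstacle is identifying $\Lambda(f)$. In general $\Lambda(\inf_i f_i)\le\inf_i\Lambda(f_i)$, and the inequality can be strict. Here, though, the $f_i$ are exactly homogeneous, so $f(u,\theta u)=u\inf_i\phi_i(\theta)=u\varphi(\theta)$ exactly, and $\Lambda(f)=\varphi$. Then $(1-\theta)\dimLs\theta F=\varphi(\theta)$ by \cref{l:L-eq} and \cref{c:dimL-lim}.

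Since $\varphi$ is an infimum of functions each satisfying \ref{i:mono}, $\varphi$ itself satisfies \ref{i:mono}: an infimum of decreasing functions $\phi_i/(1-\theta)$ is decreasing. So $\theta\mapsto\varphi(\theta)/(1-\theta)$ is decreasing, and by \cref{e:vari2}
\begin{equation*}
  \dimuLs\theta F=\inf_{\lambda\le\theta}\frac{\varphi(\lambda)}{1-\lambda}=\frac{\varphi(\theta)}{1-\theta}.
\end{equation*}
This gives (ii). If $I$ is uncountable, nothing changes, because the infimum in \cref{p:inf} is arbitrary.

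\textbf{(ii)$\Rightarrow$(i).} Let $\psi(\theta)=(1-\theta)\dimLs\theta F\in\mathcal{H}(d)$. By \cref{e:vari2}, $\varphi(\theta)/(1-\theta)=\inf_{\lambda\le\theta}\psi(\lambda)/(1-\lambda)$, which is decreasing, so \ref{i:mono} holds.

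For \ref{i:conv}, fix $\lambda,\theta$ and choose $\lambda'\le\lambda$ nearly attaining the infimum defining $\varphi(\lambda)$, so that $\psi(\lambda')/(1-\lambda')\approx\varphi(\lambda)/(1-\lambda)$. Then $\lambda'\theta\le\lambda\theta$ gives
\begin{equation*}
  \frac{\varphi(\lambda\theta)}{1-\lambda\theta}\le\frac{\psi(\lambda'\theta)}{1-\lambda'\theta}\le\frac{(1-\theta)d+\theta\psi(\lambda')}{1-\lambda'\theta},
\end{equation*}
where the second inequality uses \ref{i:conv} for $\psi$. It remains to check that this bound yields $\varphi(\lambda\theta)\le(1-\theta)d+\theta\varphi(\lambda)$. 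The argument is an elementary monotonicity comparison: the right-hand side, viewed as a function of the ratio $\psi(\lambda')/(1-\lambda')$, is increasing, and $1-\lambda'\theta\ge1-\lambda\theta$. If this algebra fails, use the geometric form from \cref{r:geom} instead. Since the lines from $(0,d)$ through $(\lambda',\psi(\lambda'))$ dominate $\psi$ on $[0,\lambda']$, and $\varphi\le\psi$ with $\varphi$ a ``monotone hull'', one checks that $\varphi$ lies below the chord from $(0,d)$ to $(\lambda,\varphi(\lambda))$. This is the step where I expect to need the most care.

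Finally, the range condition $\varphi\le d$ is clear.
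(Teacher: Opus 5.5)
Your cycle (i)$\Rightarrow$(iii)$\Rightarrow$(ii)$\Rightarrow$(i) works in outline, but it is arranged differently from the paper's, and the one step you left open does not close for the reason you give.

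\textbf{Comparison with the paper.} The paper proves (i)$\Rightarrow$(ii)$\Rightarrow$(iii)$\Rightarrow$(i).
\begin{itemize}
\item For (ii)$\Rightarrow$(iii), it writes the monotone hull $(1-\theta)\inf_{\lambda\le\theta}\psi(\lambda)/(1-\lambda)$ as $\inf_{0<\lambda\le 1}\phi_{d,\lambda,\psi(\lambda)}(\theta)$. It uses \ref{i:conv} for $\psi$ to see that the terms with $\lambda>\theta$ are $\ge\psi(\theta)$, so they do not lower the infimum.
\item Then (iii)$\Rightarrow$(i) is just closure of $\mathcal{M}(d)$ under pointwise infima.
\end{itemize}
In that order, \ref{i:conv} for the hull is inherited from the $\phi$'s and is never checked by hand.

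Your (iii)$\Rightarrow$(ii) is correct but heavier than needed. Properties \ref{i:mono}, \ref{i:superadd} and \ref{i:conv} all pass to pointwise infima (use an $\varepsilon$-argument as in \cref{p:inf} for \ref{i:superadd}). So $\varphi\in\mathcal{M}(d)\subset\mathcal{H}(d)$ and \cref{it:attain} applies directly. Your homogeneous construction $\inf_i u\phi_i(v/u)$ just re-runs the proof of \cref{it:attain}.

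\textbf{The step that needs fixing.} The real issue is the \ref{i:conv} step in (ii)$\Rightarrow$(i). Your suggested argument replaces the ratio by $k=\varphi(\lambda)/(1-\lambda)$ and then uses $1-\lambda'\theta\ge 1-\lambda\theta$. That only yields $\varphi(\lambda\theta)\le(1-\theta)d+\theta(1-\lambda')k$. But $(1-\lambda')k\ge(1-\lambda)k=\varphi(\lambda)$, so this goes the wrong way.

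The missing ingredient is the bound $k'\coloneqq\psi(\lambda')/(1-\lambda')\le d$, which holds because $\psi\in\mathcal{H}(d)$ gives $\psi(\mu)\le d(1-\mu)$. Use it as follows.
\begin{itemize}
\item Since $1-\lambda'\theta=(1-\theta)+\theta(1-\lambda')$, the quotient $\frac{(1-\theta)d+\theta(1-\lambda')k'}{1-\lambda'\theta}$ is a weighted average of $d$ and $k'$, with weights $1-\theta$ and $\theta(1-\lambda')$.
\item Replacing $\lambda'$ by $\lambda\ge\lambda'$ shrinks the weight on the smaller value $k'$, so the average can only increase.
\item Multiplying by $1-\lambda\theta$ gives $\varphi(\lambda\theta)\le(1-\theta)d+\theta(1-\lambda)k'$.
\item Since $k'\le k+\varepsilon$, this is at most $(1-\theta)d+\theta\varphi(\lambda)+\theta\varepsilon$; now let $\varepsilon\to0$.
\end{itemize}
The edge cases $\lambda=1$ and $\theta=1$ are trivial.

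Alternatively, you could skip the computation entirely. The identity you already use in (i)$\Rightarrow$(iii) shows the hull equals $\inf_\lambda\phi_{d,\lambda,\psi(\lambda)}$, and that is exactly the paper's route.
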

\begin{proof}
    To see that \cref{i:in-M} implies \cref{i:dim}, since $\mathcal{M}(d)\subset\mathcal{H}(d)$, we may apply \cref{it:attain} to get a set $F\subset\R^d$ such that $(1-\theta)\dimLs\theta F = \varphi(\theta)$.
    Then by \cref{e:vari2}, since $\theta\mapsto\varphi(\theta)/(1-\theta)$ is decreasing, it follows that $\dimuLs\theta F = \dimLs\theta F$, as required.

    To see that \cref{i:dim} implies \cref{i:inf}, write $\varphi(\theta) = (1-\theta)\dimLs\theta F$ so $\varphi\in\mathcal{H}(d)$.
    Then, observe for all $0<\theta \leq 1$ that
    \begin{equation*}
        (1-\theta)\inf_{0<\lambda\leq \theta}\frac{\varphi(\lambda)}{1-\lambda} = \inf_{0<\lambda \leq \theta}\phi_{\alpha, \lambda, \varphi(\lambda)}(\theta) = \inf_{0<\lambda \leq 1}\phi_{\alpha, \lambda, \varphi(\lambda)}(\theta).
    \end{equation*}
    But by \cref{e:vari2}, the function on the left is precisely $(1-\theta)\dimuLs\theta F$.

    Finally, \cref{i:inf} implies \cref{i:in-M} since $\phi_{d, \lambda, t}\in\mathcal{M}(d)$ and it is easy to check that $\mathcal{M}(d)$ is closed under pointwise infima (for instance, see the argument in the proof of \cref{p:inf}).
\end{proof}

\subsection{Two examples of lower spectra}
We consider two explicit families of examples of lower spectra to highlight some of the possible behaviour.

\subsubsection{Convex functions}
Suppose $f\colon [0,1]\to [0,\alpha]$ is decreasing and convex with $f(0) \leq \alpha$ and $f(1) = 0$.
For $0<\theta \leq 1$, applying convexity on the interval $[0, \theta]$, we see that $f$ satisfies \ref{i:conv}, and applying convexity on the interval $[\theta, 1]$, we see that $f$ satisfies \ref{i:mono}.
Therefore $f \in \mathcal{M}(\alpha)$.
This shows, for example, that the Lipschitz property proven in \cref{l:cont} is essentially optimal, and demonstrates that there are many sets $F \subset \R^d$ for which $\theta \mapsto \phi(\theta)$ and $\theta \mapsto \dimLs\theta F$ are not Lipschitz on $(0,1)$.

However, not all functions in $\mathcal{M}(\alpha)$ are of this form.
One can construct non-convex examples by taking an infimum of functions of the form $\phi_{\alpha, \lambda, t}$ from \cref{ss:mono}.
See \cref{f:mini} for a depiction.
\begin{figure}[t]
    \centering
    \begin{subcaptionblock}{.47\textwidth}
        \centering
        \begin{tikzpicture}[x=6cm, y=4cm]
  \pgfmathdeclarefunction{gfun}{1}{\pgfmathparse{(1-#1)^4}}

  \newcommand{\drawh}[3][]{%
    \pgfmathsetmacro{\gy}{gfun(#3)}%
    \draw[#1] (0,#2) -- (#3,\gy) -- (1,0);%
  }

  \pgfmathsetmacro{\aI}{1.0}    \pgfmathsetmacro{\yI}{0.15}
  \pgfmathsetmacro{\aII}{1.0}   \pgfmathsetmacro{\yII}{0.4}
  \pgfmathsetmacro{\aIII}{1.0}  \pgfmathsetmacro{\yIII}{0.65}

  \pgfmathsetmacro{\gI}{gfun(\yI)}
  \pgfmathsetmacro{\gII}{gfun(\yII)}
  \pgfmathsetmacro{\gIII}{gfun(\yIII)}

  \draw[-stealth] (-0.05,0) -- (1.05,0);
  \draw[-stealth] (0,-0.03) node[below, font=\small]{$0$}-- (0,{\aIII+0.04});

  \drawh[gray, thin]{\aI}{\yI}
  \drawh[gray, thin]{\aII}{\yII}
  \drawh[gray, thin]{\aIII}{\yIII}

  \pgfmathsetmacro{\sI}{\gI/(1-\yI)}
  \pgfmathsetmacro{\mIIa}{(\gII-\aII)/\yII}
  \pgfmathsetmacro{\tAB}{(\sI-\aII)/(\mIIa+\sI)}
  \pgfmathsetmacro{\vAB}{\aII+\mIIa*\tAB}

  \pgfmathsetmacro{\sII}{\gII/(1-\yII)}
  \pgfmathsetmacro{\mIIIa}{(\gIII-\aIII)/\yIII}
  \pgfmathsetmacro{\tBC}{(\sII-\aIII)/(\mIIIa+\sII)}
  \pgfmathsetmacro{\vBC}{\aIII+\mIIIa*\tBC}

  \draw[black, very thick]
    (0,\aI) -- (\yI,\gI)
    -- (\tAB,\vAB)
    -- (\yII,\gII)
    -- (\tBC,\vBC)
    -- (\yIII,\gIII)
    -- (1,0);

  \draw[densely dotted] (\yI,\gI) -- (0,{\gI + \yI*(\vAB-\gI)/(\yI-\tAB)}) node[left,font=\small] {$\kappa_1$};
  \draw[densely dotted] (\yII,\gII) -- (0,{\gII + \yII*(\vBC-\gII)/(\yII-\tBC)}) node[left,font=\small] {$\kappa_2$};
  \draw[densely dotted] (\yIII,\gIII) -- (0,{\gIII + \yIII*(\gIII)/(1-\yIII)}) node[left,font=\small] {$\kappa_3$};

  \draw[densely dotted] (\yI,\gI) -- (\yI,-0.03)node[below,font=\small] {$\lambda_1$};
  \draw[densely dotted] (\yII,\gII) -- (\yII,-0.03)node[below,font=\small] {$\lambda_2$};
  \draw[densely dotted] (\yIII,\gIII) -- (\yIII,-0.03)node[below,font=\small] {$\lambda_3$};
  \draw (1,0.02) -- (1,-0.03)node[below,font=\small] {$1$};
  \node[left] at (0, 1) {$\alpha$};
\end{tikzpicture}
        \caption{The functions $\phi_{\alpha,\lambda_i, t_i}$}
    \end{subcaptionblock}
    \begin{subcaptionblock}{.47\textwidth}
        \centering
        \begin{tikzpicture}[x=6cm, y=4cm]
  \pgfmathdeclarefunction{gfun}{1}{\pgfmathparse{(1-#1)^4}}

  \newcommand{\drawh}[3][]{%
    \pgfmathsetmacro{\gy}{gfun(#3)}%
    \pgfmathsetmacro{\mh}{(\gy - #2) / #3}%
    \pgfmathsetmacro{\ch}{\gy / (1 - #3)}%
    \draw[#1]
      plot[domain=0:#3,samples=60,smooth] (\x, {(\mh*\x + #2)/(1-\x)})
      -- plot[domain=#3:1,samples=2] (\x, {\ch});%
  }

  \pgfmathsetmacro{\aI}{1.0}    \pgfmathsetmacro{\yI}{0.15}
  \pgfmathsetmacro{\aII}{1.0}   \pgfmathsetmacro{\yII}{0.4}
  \pgfmathsetmacro{\aIII}{1.0}  \pgfmathsetmacro{\yIII}{0.65}

  \pgfmathsetmacro{\gI}{gfun(\yI)}
  \pgfmathsetmacro{\gII}{gfun(\yII)}
  \pgfmathsetmacro{\gIII}{gfun(\yIII)}

  \draw[-stealth] (-0.05,0) -- (1.05,0);
  \draw[-stealth] (0,-0.03) node[below, font=\small]{$0$}-- (0,{\aIII+0.04});

  \drawh[gray, thin]{\aI}{\yI}
  \drawh[gray, thin]{\aII}{\yII}
  \drawh[gray, thin]{\aIII}{\yIII}

  \pgfmathsetmacro{\mIa}{(\gI-\aI)/\yI}
  \pgfmathsetmacro{\mIIa}{(\gII-\aII)/\yII}
  \pgfmathsetmacro{\mIIIa}{(\gIII-\aIII)/\yIII}

  \pgfmathsetmacro{\sI}{\gI/(1-\yI)}
  \pgfmathsetmacro{\sII}{\gII/(1-\yII)}
  \pgfmathsetmacro{\sIII}{\gIII/(1-\yIII)}

  \pgfmathsetmacro{\tAB}{(\sI-\aII)/(\mIIa+\sI)}

  \pgfmathsetmacro{\tBC}{(\sII-\aIII)/(\mIIIa+\sII)}

  \pgfmathsetmacro{\eps}{0.0014}
  \pgfmathsetmacro{\epsa}{0.00095}
  \draw[black, very thick]
    plot[domain=0:\yI,samples=60,smooth] (\x, {(\mIa*\x + \aI)/(1-\x)})
    -- ({\tAB+\eps},\sI)
    plot[domain={\tAB-\epsa}:\yII,samples=60,smooth] (\x, {(\mIIa*\x + \aII)/(1-\x)})
    -- ({\tBC+\eps},\sII)
    plot[domain={\tBC-\epsa}:\yIII,samples=60,smooth] (\x, {(\mIIIa*\x + \aIII)/(1-\x)})
    -- (1,\sIII);

  \draw[densely dotted] (\yI,\sI) -- (0, \sI) node[left,font=\small]{$\kappa_1$};
  \draw[densely dotted] (\yII,\sII) -- (0, \sII) node[left,font=\small]{$\kappa_2$};
  \draw[densely dotted] (\yIII,\sIII) -- (0, \sIII) node[left,font=\small]{$\kappa_3$};

  \draw[densely dotted] (\yI,\sI) -- (\yI,-0.03) node[below,font=\small]{$\lambda_1$};
  \draw[densely dotted] (\yII,\sII) -- (\yII,-0.03) node[below,font=\small]{$\lambda_2$};
  \draw[densely dotted] (\yIII,\sIII) -- (\yIII,-0.03) node[below,font=\small]{$\lambda_3$};
  \draw (1,0.02) -- (1,-0.03)node[below,font=\small] {$1$};

  \node[left] at (0, 1) {$\alpha$};
\end{tikzpicture}
        \caption{The functions $\theta\mapsto \phi_{\alpha,\lambda_i, t_i}(\theta)/(1-\theta)$}
    \end{subcaptionblock}
    \caption{A function obtained as the minimum of three functions $\phi_{\alpha, \lambda_i, t_i}$, defined in \cref{ss:mono}, and the corresponding lower spectrum.
        Here, $\kappa_i = t_i/(1-\lambda_i)$.
    }
    \label{f:mini}
\end{figure}

\subsubsection{Non-monotonic lower spectrum}\label{ss:non-mono}
In this section, we give a simple example of a set with non-monotonic lower spectrum, different to the example from \cite[§8]{zbl:1390.28019}.
The construction is similar in spirit to the family of examples in \cite[§3.3.2]{zbl:1562.28062}.

The example will be an element of $\mathcal{H}(\alpha)$ for $\alpha\geq 0$, and it is parametrized by numbers $a_1,a_2\in(0,1]$ and a slope $0 \leq \kappa \leq \alpha$.
Write $\bm{a} = (a_1, a_2,\kappa)$.
The function $q_{\bm{a}}$ is defined so that it is continuous, has slope $-\kappa$ on the intervals $[a_1a_2, a_1]$ and $[a_2, 1]$, and satisfies inequality \ref{i:conv} with equality on the intervals $[0,a_1a_2]$ and $[a_1,a_2]$.
The precise definition is a bit tedious to state.
Set
\begin{align*}
    \alpha_1 &= \frac{\alpha - \kappa(a_1 - a_1a_2 + 1-a_2) + \alpha_2(a_2-a_1)}{a_1}\\
    \alpha_2 &= \frac{\alpha - \kappa(1-a_2)}{a_2}
\end{align*}
and then set
\begin{equation*}
    q_{\bm{a}}(\theta) =
    \begin{cases}
        \alpha_1(a_1a_2 - \theta) + \kappa(a_1 - a_1a_2 + 1-a_2) + \alpha_2(a_2 - a_1) &: 0 \leq \theta \leq a_1\\
        \kappa(a_1 - \theta + 1-a_2) + \alpha_2(a_2 - a_1) &: a_1a_2 \leq \theta \leq a_2\\
        \kappa(1-a_2) + \alpha_2(a_2 - \theta) &: a_1 \leq \theta \leq a_2\\
        \kappa(1-\theta) &: a_2 \leq \theta \leq 1
    \end{cases}
\end{equation*}
See \cref{f:non-mono} for a depiction of the function $q_{\bm{a}}$.
Note that $q_{\bm{a}}$ has slope $-\alpha_1$ on $[0, a_1a_2]$ and slope $-\alpha_2$ on $[a_1, a_2]$.
In general, $\kappa\leq \alpha_2\leq \alpha_1$, and all of the inequalities are strict if $\kappa < \alpha$.
Also, it is easy to see that $q_{\bm{a}}$ satisfies \ref{i:conv}.
\begin{figure}[t]
    \begin{subcaptionblock}{.47\textwidth}
        \centering
        \begin{tikzpicture}[x=6cm, y=4cm]
  \pgfmathsetmacro{\aI}{1/2}       %
  \pgfmathsetmacro{\aII}{2/3}      %
  \pgfmathsetmacro{\kap}{1/4}      %
  \pgfmathsetmacro{\alp}{1}        %

  \pgfmathsetmacro{\aIaII}{\aI*\aII}                    %
  \pgfmathsetmacro{\faII}{\kap*(1-\aII)}                %
  \pgfmathsetmacro{\faI}{\alp+\aI*(\faII-\alp)/\aII}   %
  \pgfmathsetmacro{\faIaII}{\faI+\kap*(\aI-\aIaII)}    %

  \draw[-stealth] (-0.05,0) -- (1.08,0);
  \draw[-stealth] (0,-0.03) node[below,font=\small] {$0$} -- (0,{\alp+0.12});

  \draw[densely dashed, gray] (0,\alp) -- (\aII,\faII);

  \draw[thick] (0,\alp) -- (\aIaII,\faIaII)
                         -- (\aI,\faI)
                         -- (\aII,\faII)
                         -- (1,0);

  \draw[densely dotted] (\aIaII,\faIaII) -- (\aIaII,-0.03) node[below,font=\small] {$a_1a_2$};
  \draw[densely dotted] (\aI,\faI) -- (\aI,-0.03) node[below,font=\small] {$a_1$};
  \draw[densely dotted] (\aII,\faII) -- (\aII,-0.03) node[below,font=\small]{$a_2$};
  \draw (1,0.02) -- (1,-0.03) node[below,font=\small]{$1$};

  \node[left,font=\small] at (0,\alp) {$\alpha$};
\end{tikzpicture}
        \caption{Plot of $q_{\bm{a}}$.}
    \end{subcaptionblock}
    \begin{subcaptionblock}{.47\textwidth}
        \centering
        \begin{tikzpicture}[x=6cm, y=4cm]
  \pgfmathsetmacro{\aI}{1/2}       %
  \pgfmathsetmacro{\aII}{2/3}      %
  \pgfmathsetmacro{\kap}{1/4}      %
  \pgfmathsetmacro{\alp}{1}        %

  \pgfmathsetmacro{\aIaII}{\aI*\aII}                    %
  \pgfmathsetmacro{\faII}{\kap*(1-\aII)}                %
  \pgfmathsetmacro{\faI}{\alp+\aI*(\faII-\alp)/\aII}   %
  \pgfmathsetmacro{\faIaII}{\faI+\kap*(\aI-\aIaII)}    %

  \pgfmathsetmacro{\gaIaII}{\faIaII/(1-\aIaII)}
  \pgfmathsetmacro{\gaI}{\faI/(1-\aI)}

  \pgfmathsetmacro{\mI}{(\faIaII-\alp)/\aIaII}
  \pgfmathsetmacro{\nI}{\alp}
  \pgfmathsetmacro{\mII}{-\kap}
  \pgfmathsetmacro{\nII}{\faIaII+\kap*\aIaII}
  \pgfmathsetmacro{\mIII}{(\faII-\faI)/(\aII-\aI)}
  \pgfmathsetmacro{\nIII}{\faI-\mIII*\aI}
  \pgfmathsetmacro{\mRef}{(\faII-\alp)/\aII}

  \draw[-stealth] (-0.05,0) -- (1.08,0);
  \draw[-stealth] (0,-0.03) node[below,font=\small]{$0$} -- (0,{\alp+0.12});

  \draw[densely dashed, gray]
    plot[domain=0:\aII,samples=60,smooth] (\x, {(\mRef*\x+\alp)/(1-\x)});

  \draw[thick]
    plot[domain=0:\aIaII,samples=60,smooth] (\x, {(\mI*\x+\nI)/(1-\x)})
    -- plot[domain=\aIaII:\aI,samples=40,smooth] (\x, {(\mII*\x+\nII)/(1-\x)})
    -- plot[domain=\aI:\aII,samples=40,smooth] (\x, {(\mIII*\x+\nIII)/(1-\x)})
    -- plot[domain=\aII:1,samples=2] (\x, {\kap});

  \draw[densely dotted] (\aIaII,\gaIaII) -- (\aIaII,-0.03)node[below,font=\small] {$a_1a_2$};
  \draw[densely dotted] (\aI,\gaI) -- (\aI,-0.03)node[below,font=\small] {$a_1$};
  \draw[densely dotted] (\aII,\kap) -- (\aII,-0.03)node[below,font=\small] {$a_2$};
  \draw (1, 0.02) -- (1,-0.03)node[below,font=\small] {$1$};

  \draw[densely dotted] (0,\kap) -- (\aII,\kap);

  \node[left,font=\small] at (0,\alp) {$\alpha$};
  \node[left,font=\small] at (0,\kap) {$\kappa$};
\end{tikzpicture}
        \caption{Plot of $\theta\mapsto q_{\bm{a}}(\theta)/(1-\theta)$.}
    \end{subcaptionblock}
    \caption{A plot of the function $q_{\bm{a}}$ where $\alpha = 1$ and $\bm{a} = (1/2, 2/3, 1/4)$, as well as the corresponding lower spectrum.}
    \label{f:non-mono}
\end{figure}

In order to prove \ref{i:superadd}, fix $\theta\in(0,1]$ and consider the function
\begin{equation*}
    \phi_{\bm{a}, \theta}(\lambda) = \frac{q_{\bm{a}}(\theta \lambda) - q_{\bm{a}}(\theta)}{\theta}.
\end{equation*}
We must show that $\phi_{\bm{a}, \theta} \geq q_{\bm{a}}$ for all $\theta\in(0,1]$.

To prove this, observe that $\phi_{\bm{a},\theta}$ acts as follows.
For $0<\theta \leq 1$, let
\begin{align*}
    I_1(\theta) &= [0, a_1a_2/\theta]\cap(0,1],\\
    I_2(\theta) &= [a_1a_2/\theta, a_1/\theta]\cap(0,1],\\
    I_3(\theta) &= [a_1/\theta, a_2/\theta]\cap(0,1],\\
    I_4(\theta) &= [a_2/\theta, 1]\cap(0,1].
\end{align*}
Of course, some of the intervals $I_j(\theta)$ will be empty for small $\theta$.
For an interval $I$, write $|I|$ to denote its length.
Then $\phi_{\bm{a},\theta}$ is the continuous function with slopes $-\alpha_1$ on $I_1(\theta)$, $-\kappa$ on $I_2(\theta)$, $-\alpha_2$ on $I_3(\theta)$, and $-\kappa$ on $I_4(\theta)$ such that $\phi_{\bm{a},\theta}(1) = 0$.

Recall that $\kappa\leq\alpha_2 \leq \alpha_1$.
We consider four cases depending on the value of $\theta\in(0,1]$:
\begin{enumerate}
    \item Suppose $a_2 < \theta$.
        Then $|I_j(\theta)| \geq |I_j(1)|$ for $j \leq 3$ and $|I_4(\theta)| \leq |I_4(1)|$ so that $q_{\bm{a}} \leq \phi_{\bm{a},\theta}$.
    \item Suppose $a_1 < \theta \leq a_2$.
        Then $I_4(\theta) = \varnothing$, $|I_2(\theta)| \leq |I_4(1)|$, and $a_1 \leq a_1a_2/\theta < a_2$.
        Therefore, $q_{\bm{a}}(a_1a_2/\theta)\leq \phi_{\bm{a},\theta}(a_1a_2/\theta)$ and it follows that $q_{\bm{a}} \leq \phi_{\bm{a},\theta}$.
    \item Suppose $a_1a_2 \leq \theta < a_1$.
        Then $I_3(\theta) = I_4(\theta) = \varnothing$ and $|I_2(\theta)| \leq |I_4(1)|$, so $q_{\bm{a}} \leq \phi_{\bm{a},\theta}$.
    \item Suppose $\theta < a_2a_2$.
        Then $q_{\bm{a}}$ has constant slope $-\alpha_1$ and the result is trivial.
\end{enumerate}
Therefore $q_{\bm{a}}$ satisfies \ref{i:superadd}, so that $q_{\bm{a}}\in\mathcal{H}(\alpha)$.

\section{Uniform sets and lower spectra}\label{s:unif}
We conclude this paper with our results concerning uniform sets.
This includes the construction uniform sets used in the proof of \cref{it:two-scale}, as well as the proof of \cref{it:attain-u}.

\subsection{Uniform sets}
First, we recall the definition of the (upper) two-scale branching function from \cite{arxiv:2510.07013}.
For a metric space $X$ we let $\ub = \ub_X\colon \Delta \to [0, \infty]$ denote the function
\begin{equation*}
    \ub_X(u,v) = \log \sup_{x\in X}N_{2^{-u}}(B(x, 2^{-v})).
\end{equation*}
This is the upper variant of the branching function $\lb_X$.
The following properties of $\ub_X$ are easy to check:
\begin{enumerate}[nl,r]
    \item $\ub_X(u,u) = 0$ for all $u\in\R$.
    \item $\ub_X(u,v)$ is increasing in $u$ and decreasing in $v$.
    \item For all $v\leq w \leq u$,
        \begin{equation*}
            \ub_X(u,v) \leq \ub_X(u, w) + \ub_X(w, v).
        \end{equation*}
\end{enumerate}
For a general metric space $X$ the inequality $\lb_X \leq \ub_X$ always holds, and clearly these functions can be very different.
For much more detail concerning the function $\ub_X$, we refer the reader to \cite{arxiv:2510.07013}.
\begin{definition}\label{d:unif}
    Let $\eta\colon\R\to\R$.
    We say that a metric space $X$ is \emph{$\eta$-uniform} if $\lim_{u\to\infty}u^{-1}\eta(u) = 0$ and
    \begin{equation*}
        \ub_X(u,v) \leq \lb_X(u,v) + \eta(u)\quad\text{for all}\quad(u,v)\in\Delta.
    \end{equation*}
    We say that $X$ is \emph{uniform} if it is $\eta$-uniform for some $\eta$.
\end{definition}
As long as the space $X$ is somewhat nice (so that covers and packings are comparable), the two-scale branching functions of uniform sets are determined entirely by covering numbers.
\begin{definition}
    For $\alpha \geq 0$, we let $\mathcal{Y}(\alpha)$ denote the set of increasing $\alpha$-Lipschitz functions $f\colon[0,\infty)\to[0,\infty)$ with $f(0) = 0$.
\end{definition}
We now obtain our description of uniform sets.
\begin{corollary}\label{c:u-lip}
    Let $d\in\N$.
    Then there is a constant $C_d > 0$ so that for any $\eta$-uniform subset $X\subset\R^d$, there is a function $f\in\mathcal{Y}(d)$ such that for all $0 \leq v \leq u$,
    \begin{align*}
        |\ub_X(u,v) - (f(u) - f(v))| \leq \eta(u) + C_d\\*
        |\lb_X(u,v) - (f(u) - f(v))| \leq \eta(u) + C_d
    \end{align*}
\end{corollary}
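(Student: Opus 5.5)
The plan is to take $f$ to be a regularised version of the single-scale function $g(u) \coloneqq \lb_X(u,0)$ for $u\geq 0$. The comparison of $\ub_X(u,v)$ and $\lb_X(u,v)$ with $g(u)-g(v)$ should then follow from three ingredients: superadditivity of $\lb_X$ (\cref{l:superad}), subadditivity of $\ub_X$, and the uniformity hypothesis $\ub_X \leq \lb_X + \eta$. The Lipschitz regularisation adds only an $O_d(1)$ error, by \cref{l:approx-br}.

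\textbf{Step 1: comparison with $g$.} Fix $0\leq v\leq u$. For the upper bound, uniformity and then superadditivity give
\begin{equation*}
    \ub_X(u,v) \leq \lb_X(u,v) + \eta(u) \leq \lb_X(u,0) - \lb_X(v,0) + \eta(u) = g(u)-g(v)+\eta(u).
\end{equation*}
Since $\lb_X\leq\ub_X$, the same upper bound holds for $\lb_X(u,v)$. For the lower bound, uniformity and then subadditivity of $\ub_X$ give
\begin{equation*}
    \lb_X(u,v) \geq \ub_X(u,v) - \eta(u) \geq \ub_X(u,0) - \ub_X(v,0) - \eta(u).
\end{equation*}
We then use $\ub_X(u,0)\geq \lb_X(u,0)=g(u)$ and $\ub_X(v,0)\leq g(v)+\eta(v)$. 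This yields $\lb_X(u,v)\geq g(u)-g(v)-\eta(u)-\eta(v)$, and the same lower bound holds for $\ub_X(u,v)$ since $\ub_X\geq\lb_X$.

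\textbf{Step 2: the stray $\eta(v)$ term.} This term is the one point needing care. I would first replace $\eta$ by its running supremum $\tilde\eta(u)=\sup_{0\leq t\leq u}\eta(t)$. This is increasing, still satisfies $\tilde\eta(u)/u\to 0$, and $X$ remains $\tilde\eta$-uniform. With $\eta$ increasing, $\eta(u)+\eta(v)\leq 2\eta(u)$. The resulting factor $2$ is absorbed either by rescaling $\eta$ (uniformity only concerns the sublinear growth) or by reading the statement with $\eta$ understood up to such harmless modifications.

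If an exact $\eta(u)$ bound is wanted, a better route may be to symmetrise the choice of $g$, for instance $g=\tfrac12(\lb_X(\cdot,0)+\ub_X(\cdot,0))$. Then each side loses only half of $\eta(u)+\eta(v)$, which is at most $\eta(u)$ for increasing $\eta$. I expect this bookkeeping of $\eta$ to be the main (though minor) obstacle.

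\textbf{Step 3: replacing $g$ by an element of $\mathcal{Y}(d)$.} Define $f(u)=\inf_{0\leq w\leq u}\bigl(g(w)+d(u-w)\bigr)$, which is an infimal convolution. Its properties are as follows.
\begin{itemize}
    \item $f$ is $d$-Lipschitz, being an infimum of $d$-Lipschitz functions.
    \item $f\leq g$, by taking $w=u$.
    \item $f(0)=g(0)=0$.
    \item $f$ is increasing. Indeed, for $u<u'$ each term with $w\leq u$ increases in $u$, while the new terms with $w\in(u,u']$ are at least $g(w)\geq g(u)\geq f(u)$, since $g$ is increasing by monotonicity of $\lb_X$.
    \item By \cref{l:approx-br}, $g(u)-g(w)\leq d(u-w)+O_d(1)$ for $w\leq u$, and hence $g(u)-O_d(1)\leq f(u)\leq g(u)$.
\end{itemize}
Thus $f\in\mathcal{Y}(d)$ and $|(f(u)-f(v))-(g(u)-g(v))|\leq C_d$. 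Combining this with Step 1 gives both claimed inequalities.
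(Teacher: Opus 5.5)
\textbf{There is a genuine gap in Step~2.} Steps~1 and~3 are correct. However, Step~1 only yields $\lb_X(u,v)\geq g(u)-g(v)-\eta(u)-\eta(v)$, and neither of your remedies recovers the stated error $\eta(u)+C_d$.

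\emph{Running supremum.} Passing to $\sup_{t\leq u}\eta(t)$ and ``absorbing the factor $2$'' proves a weaker statement, because the corollary concerns the given $\eta$. Moreover, that supremum need not be finite unless $\eta$ is locally bounded.

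\emph{Symmetrisation.} This does not give what you claim either. Write $\delta=\ub_X(\cdot,0)-\lb_X(\cdot,0)\in[0,\eta]$ and $g=\lb_X(\cdot,0)+\delta/2$. Your Step~1 inequalities then give only
\[
    -\eta(u)-\tfrac12\bigl(\delta(v)-\delta(u)\bigr)\leq\lb_X(u,v)-\bigl(g(u)-g(v)\bigr)\leq\ub_X(u,v)-\bigl(g(u)-g(v)\bigr)\leq\eta(u)+\tfrac12\bigl(\delta(v)-\delta(u)\bigr).
\]
The resulting error can be as large as $\eta(u)+\tfrac12\eta(v)$, which is up to $\tfrac32\eta(u)$ for increasing $\eta$. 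It is not $\tfrac12(\eta(u)+\eta(v))$. In fact, this $g(u)-g(v)$ is exactly the midpoint of the interval to which Step~1 confines $\lb_X(u,v)$ and $\ub_X(u,v)$. So no other choice of $g$ can do better using only those inequalities; a genuinely new inequality is needed.

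\textbf{The missing ingredient.} You need a \emph{mixed} estimate that uses the doubling geometry of $\R^d$, not just sub- and superadditivity. Fix $0\leq v\leq u$ and choose $x\in X$ attaining $\min_{y\in X}P_{2^{-v}}(B(y,1))$. By \cref{e:cov-eq} and doubling, $N_{2^{-v}}(B(x,1))\leq C_d N_{2^{-v+2}}(B(x,1))\leq C_d2^{\lb_X(v,0)}$. Covering each of these balls by at most $2^{\ub_X(u,v)}$ balls of radius $2^{-u}$ gives
\[
    2^{\lb_X(u,0)}\leq P_{2^{-u}}(B(x,1))\leq N_{2^{-u}}(B(x,1))\leq C_d\,2^{\lb_X(v,0)+\ub_X(u,v)}.
\]
Hence $\ub_X(u,v)\geq g(u)-g(v)-\log C_d$ for your $g=\lb_X(\cdot,0)$. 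By uniformity, $\lb_X(u,v)\geq g(u)-g(v)-\eta(u)-\log C_d$, with no $\eta(v)$ term. Together with your upper bound $\lb_X(u,v)\leq\ub_X(u,v)\leq g(u)-g(v)+\eta(u)$ and your Step~3, this proves the statement as written.

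\textbf{Comparison with the paper.} The paper runs the same scheme with $g=\ub_X(\cdot,0)$ and removes the cross term by asserting $\ub_X(z,0)-\lb_X(z,0)\leq c_d$ in \cref{e:rd-c}. For uniform $X$ one only has $\leq\eta(z)$ there, and this can be unbounded. An example is two far-apart homogeneous Cantor sets whose $2^{-z}$-covering numbers differ by a factor $2^{\sqrt z}$. So that step has the same hole, and following the paper will not close it. What closes it is an inequality like the one above, or its dual version for $g=\ub_X(\cdot,0)$.
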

\begin{proof}
    Let $X \subset \R^d$ be $\eta$-uniform and set
    \begin{equation*}
        g(u) = \ub_X(u, 0).
    \end{equation*}
    In $\R^d$, there is a constant $c_d$ so that for all $z\in\R$,
    \begin{equation}\label{e:rd-c}
        0 \leq \ub_X(z,0) - \lb_X(z,0) \leq c_d.
    \end{equation}
    Now, by subadditivity followed by $\eta$-uniformity,
    \begin{equation*}
        \ub_X(u, 0) - \ub_X(v, 0) \leq \ub_X(u,v) \leq \lb_X(u,v) + \eta(u).
    \end{equation*}
    Similarly, by superadditivity followed by $\eta$-uniformity,
    \begin{equation*}
        \lb_X(u, 0) - \lb_X(v, 0) \geq \lb_X(u,v) \geq \ub_X(u,v) - \eta(u).
    \end{equation*}
    But by \cref{e:rd-c},
    \begin{equation*}
        |(\ub_X(u, 0) - \ub_X(v, 0)) - (\lb_X(u, 0) - \lb_X(v, 0))| \leq 2c_d
    \end{equation*}
    Thus the conclusion holds with $C_d = 2c_d$ and $g$ in place of $f$.

    To finish the proof, we note that $g$ is increasing, $g(0) = 0$, and for all $u\in\R$ and $z\geq 0$,
    \begin{equation*}
        g(u+z)\leq g(u) + dz + c_d
    \end{equation*}
    for some constant $c_d \geq 0$.
    By taking the supremum over $d$-Lipschitz functions bounded above by $g$ and then adding a fixed constant, we get $f\in\mathcal{Y}(d)$ such that $g(u) - c_d \leq f \leq g(u) + c_d$ for all $u\geq 0$.
    This completes the proof.
\end{proof}
\begin{remark}
    Setting $v = 0$, we see that the function $f$ provided by \cref{c:u-lip} satisfies
    \begin{equation*}
        \ub_X(u, 0) = f(u) + o(u).
    \end{equation*}
    (This is also clear from the proof itself.)
    For example, if $\diam X \leq 1$, then $\ub_X(u,0) =\log N_{2^{-u}}(X)$ is just the logarithm of the covering number at scale $2^{-u}$.
    \cref{c:u-lip} says that these covering numbers approximately determine all of the two-scale covering numbers of a uniform set $X$.
\end{remark}

\subsection{Constructing uniform sets}\label{ss:unif-ct}
A standard way to construct a uniform set in $\R^d$ is as follows.
Suppose $f\in\mathcal{Y}(d)$ is arbitrary.
By taking a supremum over all integer-valued $d$-Lipschitz functions with $f(0) = 0$ bounded above by $f$, there exists a function $h\colon\Z\cap[0,\infty)\to\Z\cap[0,\infty)$ which is also an increasing and $d$-Lipschitz function with $h(0) = 0$, such that
\begin{equation*}
    f(k) - 1 < h(k) \leq f(k)\quad\text{for all}\quad k\geq 0.
\end{equation*}
Then, define a sequence $(a_k)_{k=1}^\infty \in \{0,\ldots,d\}^{\N}$ by the rule $a_k = h(k) - h(k-1)$.
We inductively construct nested families of dyadic cubes as follows.
Let $\mathcal{Q}_0 = \{[0,1]^d\}$.
Now, suppose we have constructed $\mathcal{Q}_k$ as a union of dyadic cubes at level $k$, for some $k \geq 0$.
Then, replace each cube $Q\in\mathcal{Q}_k$ with $a_{k+1}$ sub-cubes at level $k+1$ (the precise choice of sub-cube is irrelevant).
Finally, set
\begin{equation*}
    K_f = \bigcap_{k=0}^\infty\bigcup_{Q\in\mathcal{Q}_k}Q.
\end{equation*}
Since each level $k$ dyadic cube contains exactly $2^{h(n) - h(k)}$ dyadic cubes at level $n$ for $k \leq n$, it follows that $K_f$ is an $O_d(1)$-uniform set with branching function
\begin{equation*}
    \lb_{K_f}(u,v) = f(u) - f(v) + O_d(1).
\end{equation*}
Moreover, if $f(u) = 0$ for some $u \geq 0$, then $K_f$ is contained in a single dyadic cube of side-length $2^{-\lceil u\rceil}$.

To summarize, we have proven the following.
\begin{proposition}\label{p:unif-ct}
    Let $d\in\N$ and $f\in\mathcal{Y}(d)$.
    Suppose $f(z) = 0$ for some $z\geq 0$.
    Then there exists a compact $O_d(1)$-uniform set $K_f\subset [0, 2^{-z}]^d$ such that
    \begin{equation*}
        \lb_{K_f}(u,v) = f(u) - f(v) + O_d(1)
    \end{equation*}
    for all $0\leq v\leq u$.
\end{proposition}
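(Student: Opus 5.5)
The plan is to verify that the dyadic construction $K_f$ described just above has the claimed properties, in three steps: first discretize $f$ and fix the containment, then count cubes, then convert cube counts into packing and covering numbers.

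\textbf{Step 1: discretization and containment.} I take $h$ integer-valued, increasing and $d$-Lipschitz on $\Z\cap[0,\infty)$ with $f(k)-1<h(k)\le f(k)$, and set $a_k = h(k)-h(k-1)\in\{0,\dots,d\}$. At step $k$ each cube is replaced by $2^{a_k}$ of its $2^d$ children; the wording ``$a_{k+1}$ sub-cubes'' in the construction above should read $2^{a_{k+1}}$ sub-cubes for the count $2^{h(n)-h(k)}$ to hold. I make the choice uniformly: always keep the children whose last $d-a_k$ binary coordinates are $0$.

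Since $f$ is increasing, non-negative and $f(z)=0$, we get $h(k)=0$ for all $k\le\lfloor z\rfloor$. Hence $K_f\subset[0,2^{-\lfloor z\rfloor}]^d$. To land inside $[0,2^{-z}]^d$, I rescale by $2^{-t}$ with $t=z-\lfloor z\rfloor\in[0,1)$. From the definition, $\lb_{2^{-t}X}(u,v)=\lb_X(u-t,v-t)$. Since $f$ is $d$-Lipschitz, $f(u-t)-f(v-t)=f(u)-f(v)+O(d)$, so the rescaling only changes the constant.

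\textbf{Step 2: exact cube counts.} By induction on the construction, every level-$k$ cube in $\mathcal{Q}_k$ contains exactly $2^{h(n)-h(k)}$ cubes of $\mathcal{Q}_n$ for $n\ge k$. Each such cube meets $K_f$, because the cubes are nested, closed and non-empty. Combined with $h=f+O(1)$, this gives $\log(\text{count}) = f(n)-f(k)+O(1)$.

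\textbf{Step 3: comparing balls with cubes, uniformly in $x$.} Fix $x\in K_f$ and $0\le v\le u$, and let $k=\lceil v\rceil+\lceil\log\sqrt d\rceil$.
\begin{itemize}
  \item The level-$k$ cube of $\mathcal{Q}_k$ containing $x$ lies in $B(x,2^{-v})$.
  \item $B(x,2^{-v})$ meets at most $O_d(1)$ cubes of $\mathcal{Q}_{\lfloor v\rfloor}$.
\end{itemize}
For the lower bound on $P_{2^{-u}}$, take $n=\lceil u\rceil+c_d$. Choose one point of $K_f$ in each level-$n$ cube inside the level-$k$ cube, keeping only cubes whose integer grid indices agree modulo a fixed $M_d$. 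This subfamily is $4\cdot2^{-u}$-separated and loses only a factor $M_d^d=O_d(1)$.

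For the upper bound on $N_{2^{-u}}$, cover by the level-$\lfloor u\rfloor-c_d$ cubes meeting the $O_d(1)$ level-$\lfloor v\rfloor$ cubes. Each such cube is covered by $O_d(1)$ balls of radius $2^{-u}$.

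By Step 2 and the Lipschitz property of $f$, which absorbs the shifts by $O_d(1)$ in $u$ and $v$, both bounds equal $f(u)-f(v)+O_d(1)$, independently of $x$. This gives the sup/inf bound
\begin{equation*}
f(u)-f(v)-O_d(1)\le\lb_{K_f}(u,v)\le\ub_{K_f}(u,v)\le f(u)-f(v)+O_d(1),
\end{equation*}
which yields both the branching formula and $O_d(1)$-uniformity.

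The main obstacle is Step 3. The separation convention of $P_r$, namely $4r$-separated points, does not match adjacent dyadic cubes. This is why the sparse sub-selection modulo $M_d$ is needed, and why every scale shift must be a bounded constant so that the $d$-Lipschitz property of $f$ absorbs it.
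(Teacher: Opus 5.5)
Your proposal is correct and uses the same argument as the paper: the dyadic construction $K_f$ together with the exact count $2^{h(n)-h(k)}$ of surviving level-$n$ cubes inside each surviving level-$k$ cube. The paper simply asserts that this count gives the branching formula and $O_d(1)$-uniformity. You supply the details it leaves implicit: the correction to $2^{a_{k+1}}$ sub-cubes; the rescaling by $2^{-t}$ for containment in $[0,2^{-z}]^d$ when $z\notin\mathbb{Z}$ (the construction alone only gives a cube of side $2^{-\lfloor z\rfloor}$); and the residue-class sparsification that reconciles cube counts with $4r$-separated packings. The one case you leave unstated, the regime $u-v=O_d(1)$ where $n<k$, is harmless because there $0\le f(u)-f(v)\le d(u-v)=O_d(1)$.
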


\subsection{Classification of lower spectra of uniform sets}\label{ss:unif-attain}
The goal of this section is to study the lower spectra of uniform sets and to prove \cref{it:attain-u}.
We begin with the space of functions.
\begin{definition}
    Let $\alpha \geq 0$.
    We let $\mathcal{U}(\alpha)$ denote the functions $\varphi\colon [0,1]\to [0,\alpha]$ which are $\alpha$-Lipschitz and satisfy \ref{i:superadd}.
\end{definition}
Comparing the definition of $\mathcal{U}(\alpha)$ with $\mathcal{H}(\alpha)$, we see that we have replaced the convexity property \ref{i:conv} with the $\alpha$-Lipschitz property.
The $\alpha$-Lipschitz property is strictly stronger.
\begin{lemma}\label{l:U-inc}
    For all $\alpha \geq 0$, we have $\mathcal{U}(\alpha) \subset \mathcal{H}(\alpha)$, and the inclusion is strict if $\alpha > 0$.
\end{lemma}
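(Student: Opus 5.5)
We need to show two things: every $\varphi\in\mathcal{U}(\alpha)$ satisfies \ref{i:conv} (it satisfies \ref{i:superadd} by definition), and for $\alpha>0$ there is some $\varphi\in\mathcal{H}(\alpha)$ that is not $\alpha$-Lipschitz.

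\textbf{Inclusion.} Let $\varphi\in\mathcal{U}(\alpha)$. The first step is to record that $\varphi(1)=0$, which follows from \ref{i:superadd} with $\theta=1$, and that $\varphi$ is decreasing, which follows from \ref{i:superadd} since $\theta\varphi(\lambda)\geq 0$. To verify \ref{i:conv}, fix $\lambda,\theta\in(0,1]$ and apply the $\alpha$-Lipschitz property on $[0,1]$ to the points $\lambda\theta$ and $\lambda$:
\begin{equation*}
    \varphi(\lambda\theta) \leq \varphi(\lambda) + \alpha(\lambda-\lambda\theta) = \varphi(\lambda) + \alpha\lambda(1-\theta).
\end{equation*}
It then suffices to show $\varphi(\lambda)+\alpha\lambda(1-\theta) \leq (1-\theta)\alpha + \theta\varphi(\lambda)$, which rearranges to $(1-\theta)\varphi(\lambda) \leq (1-\theta)\alpha(1-\lambda)$, i.e.\ $\varphi(\lambda)\leq \alpha(1-\lambda)$. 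This last inequality is the Lipschitz bound applied between $\lambda$ and $1$, using $\varphi(1)=0$. Hence $\varphi\in\mathcal{H}(\alpha)$. (Alternatively, one can use the equivalent form of \ref{i:conv} as ``$\theta\mapsto(\alpha-\varphi(\theta))/\theta$ is decreasing'', but the direct computation above is shorter.)

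\textbf{Strictness.} For $\alpha>0$ we exhibit a convex example from the earlier subsection on convex functions. Take $\varphi(\theta)=\alpha(1-\theta)^2$. This function is decreasing and convex with $\varphi(0)=\alpha$ and $\varphi(1)=0$, so by that discussion $\varphi\in\mathcal{M}(\alpha)\subset\mathcal{H}(\alpha)$. However, $\varphi'(0)=-2\alpha$, so $\varphi$ is not $\alpha$-Lipschitz near $0$, and therefore $\varphi\notin\mathcal{U}(\alpha)$. A quick direct check that $\varphi$ satisfies \ref{i:superadd} is also available, but it is not needed given $\mathcal{M}(\alpha)\subset\mathcal{H}(\alpha)$.

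There is no real obstacle here. The only point requiring any care is noticing that \ref{i:conv} reduces to the bound $\varphi(\lambda)\leq\alpha(1-\lambda)$, which uses $\varphi(1)=0$ together with the Lipschitz property.
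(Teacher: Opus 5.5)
Your proof is correct and matches the paper's. For the inclusion, the paper uses the same reduction of (W) to the bound $\varphi(\lambda)\leq\alpha(1-\lambda)$, which follows from $\varphi(1)=0$ and the $\alpha$-Lipschitz property. For strictness, the paper points to the piecewise-linear $\phi_{\alpha,\lambda,t}$ (these fail to be $\alpha$-Lipschitz when $t<\alpha(1-\lambda)$), whereas you use the convex function $\alpha(1-\theta)^2$; both examples lie in $\mathcal{H}(\alpha)$ for the same reason, namely $\mathcal{M}(\alpha)\subset\mathcal{H}(\alpha)$.
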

\begin{proof}
    Let $\varphi\in\mathcal{U}(\alpha)$ and recall that $\varphi(1) = 0$.
    Since $\varphi$ is $\alpha$-Lipschitz, $\alpha \lambda \leq \alpha - \varphi(\lambda)$ and therefore
    \begin{equation*}
        \varphi(\lambda\theta) - \varphi(\lambda) \leq \alpha \lambda (1-\theta) \leq (1-\theta)(\alpha - \varphi(\lambda)).
    \end{equation*}
    This proves \ref{i:conv}, and therefore $\varphi\in\mathcal{H}(\alpha)$.

    To see that the inclusion is strict when $\alpha > 0$, use for example the functions $\phi_{\alpha, \lambda, t}\in\mathcal{H}(\alpha)$ defined in \cref{ss:mono}.
\end{proof}
We now prove that $\mathcal{U}(d)$ is precisely the set of lower spectra of uniform subsets of $\R^d$.
\begin{restatement}{it:attain-u}
    Let $d\in\N$ and $\varphi\colon[0,1]\to [0,d]$.
    Then there is a uniform set $F\subset\R^d$ with $(1-\theta)\dimLs\theta F = \varphi(\theta)$ for all $\theta\in(0,1]$ if and only if $\varphi\in\mathcal{U}(d)$.
\end{restatement}
\begin{proof}
    First suppose that $F\subset\R^d$ is $\eta$-uniform and write $\varphi(\theta) = (1-\theta)\dimLs\theta F$.
    By \cref{c:u-lip}, get $f\in\mathcal{Y}(d)$ and a constant $C_d\geq 0$ such that for all $0\leq v\leq u$,
    \begin{equation*}
        |\lb_F(u,v) - (f(u) - f(v))| \leq C_d + \eta(u).
    \end{equation*}
    Thus if we define $g(u,v) = f(u) - f(v)$, certainly $g\in\mathcal{L}(d)$.
    In particular, $\Lambda(\lb_F) = \Lambda(g)$ by \cref{l:L-eq} so by \cref{c:dimL-lim}, for $0<\theta \leq 1$,
    \begin{equation*}
        \varphi(\theta) = \liminf_{u\to\infty}\frac{f(u) - f(\theta u)}{u}.
    \end{equation*}
    Since $f$ is $d$-Lipschitz, $\theta\mapsto u^{-1}(f(u) - f(\theta u))$ is $d$-Lipschitz, so $\varphi$ is $d$-Lipschitz and therefore $\varphi\in\mathcal{U}(d)$.

    For the converse direction, for a given $f\in\mathcal{Y}(d)$, the corresponding uniform set $K_f\subset [0,1]^d$ provided by \cref{p:unif-ct} satisfies
    \begin{equation}\label{e:dimls-e}
        (1-\theta)\dimLs\theta K_f = \liminf_{u\to\infty}\frac{f(u) - f(\theta u)}{u}.
    \end{equation}
    Therefore it suffices to choose $f\in\mathcal{Y}(d)$ such that this limit infimum is exactly the function $\varphi\in\mathcal{U}(d)$.
    Let $0 = v_1 < u_1 < v_2 < u_2 <\cdots$ be a sequence such that
    \begin{equation*}
        \lim_{k\to\infty}\frac{v_k}{u_k} = \lim_{k\to\infty}\frac{u_k}{v_{k+1}} = 0.
    \end{equation*}
    Begin with $f(0) = 0$.
    Now, suppose we have defined $f$ on the interval $[0, v_k]$ for some $k\in\N$.
    Let $\theta_k = v_k/u_k$.
    With \cref{e:dimls-e} in mind, we would like to define $f$ on the interval $[v_k, u_k]$ such that, for $\theta_k \leq \theta \leq 1$,
    \begin{equation}\label{e:fdef}
        f(u_k) - f(\theta u_k) = u_k\varphi(\theta).
    \end{equation}
    Recalling that the value $f(v_k)$ was defined in the previous stage of the construction, for $v_k \leq u \leq u_k$ we define
    \begin{equation*}
        f(u) = f(v_k) + u_k\varphi(\theta_k) - u_k\varphi(u/u_k)
    \end{equation*}
    and it is easy to check that \cref{e:fdef} is satisfied since $\varphi(1) = 0$.
    Since $\varphi$ is decreasing and $d$-Lipschitz, $f$ is increasing and $d$-Lipschitz on $[0, u_k]$.
    Finally, for $u_k \leq u \leq v_{k+1}$, we simply define
    \begin{equation}\label{e:fdef-2}
        f(u) = d(u - u_k) + f(u_k).
    \end{equation}
    Therefore, $f\in\mathcal{Y}(d)$.

    It remains to show that
    \begin{equation*}
        \liminf_{u\to\infty}\frac{f(u) - f(\theta u)}{u} = \varphi(\theta)
    \end{equation*}
    for all $0 < \theta \leq 1$.
    Certainly
    \begin{equation*}
        \liminf_{u\to\infty}\frac{f(u) - f(\theta u)}{u} \leq \lim_{k\to\infty}\frac{f(u_k) - f(\theta u_k)}{u_k} = \varphi(\theta)
    \end{equation*}
    since the final equality holds for all $k$ sufficiently large by \cref{e:fdef}.
    To complete the proof, fix $0<\kappa \leq 1$: it suffices to show that for all $u$ sufficiently large,
    \begin{equation}\label{e:upper}
        f(u) - f(\kappa u) \geq u\varphi(\kappa).
    \end{equation}
    Let $N$ be sufficiently large so that for all $k \geq N$, $v_k/u_k \leq\kappa$ and $u_k/v_{k+1} \leq \kappa$, and let $u \geq v_N/\kappa$ be arbitrary.
    The choice of $N$ implies that there are four cases depending on the values of $u$ and $\kappa u$:
    \begin{enumerate}
        \item\label{e:c1} \emph{Suppose $v_k \leq \kappa u \leq u \leq u_k$ for some $k\in\N$.}
            Let $u = \theta u_k$ so that $\kappa u = \kappa\theta u_k$.
            Then by \ref{i:superadd},
            \begin{equation*}
                f(u) - f(\kappa u) = u_k\varphi(\kappa\theta) - u_k \varphi(\theta) \geq u_k\theta\varphi(\kappa) = u\varphi(\kappa).
            \end{equation*}
        \item \emph{Suppose $u_k \leq \kappa u \leq u \leq u_{k+1}$ for some $k\in\N$.}
            Since $\varphi$ is $d$-Lipschitz, $\varphi(\kappa)\leq d(1-\kappa)$ so
            \begin{equation*}
                f(u) - f(\kappa u) = u (1-\kappa)d \geq u\varphi(\kappa).
            \end{equation*}
        \item \emph{Suppose $v_k \leq \kappa u \leq u_k \leq u\leq v_{k+1}$ for some $k\in\N$.}
            Let $\lambda$ be such that $\kappa u = \lambda u_k$ and write $\theta = \kappa/\lambda \leq 1$.
            Then by \ref{i:conv} (recalling that $\mathcal{U}(d)\subset\mathcal{H}(d)$),
            \begin{align*}
                f(u) - f(\kappa u) &= \alpha(u - u_k) + u_k\varphi(\lambda)\\
                                   &= u(\alpha(1-\theta) + \theta\varphi(\lambda))\\
                                   &\geq u\varphi(\lambda\theta) = u\varphi(\kappa).
            \end{align*}
        \item \emph{Suppose $u_{k-1} \leq \kappa u \leq v_k \leq u \leq u_k$ for some $k\in\N$.}
            Let $\lambda$ be such that $\lambda u = v_k$.
            Applying case \cref{e:c1} on the interval $[\lambda u, u]$ and then using the fact that $\varphi$ is $\alpha$-Lipschitz,
            \begin{align*}
                f(u) - f(\kappa u) &= f(u) - f(\lambda u) + \alpha(\lambda u - \kappa u)\\
                                   & \geq u(\varphi(\lambda) + \alpha(\lambda - \kappa))\\
                                   & \geq u \varphi(\kappa).
            \end{align*}
    \end{enumerate}
    Thus \cref{e:upper} follows, completing the proof.
\end{proof}
To conclude, we note that one can prove similar results as proven in \cref{ss:mono} for the monotone lower spectrum of uniform sets.
Given $\alpha \geq 0$, $\lambda\in(0,1]$, and $0 \leq t \leq \alpha(1-\lambda)$, we define the function
\begin{equation*}
    \psi_{\alpha, \lambda, t}(\theta) = \begin{cases}
        t + \alpha(\theta - \lambda) &: 0 < \theta \leq \lambda,\\*
        t\frac{1-\theta}{1-\lambda} &: \lambda \leq \theta \leq 1.
    \end{cases}
\end{equation*}
This is the function which has constant slope $-\alpha$ on the interval $[0,\lambda]$, with value $t$ at $\lambda$, and constant slope on $[\lambda, 1]$ with value $0$ at $1$.
The condition on $t$ ensures that $\psi_{\alpha, \lambda, t}\in\mathcal{M}(\alpha)\cap\mathcal{U}(\alpha)$.
Recalling the definition of $\phi$ from \cref{ss:mono}, we note that $\psi_{\alpha, \lambda,t}\leq\phi_{\alpha,\lambda,t}$, and moreover the inequality is strict on $(0,\lambda)$ if and only $t < \alpha(1-\lambda)$.
Following an identical approach to that in \cref{ss:mono}, one can obtain the following result.
\begin{proposition}\label{p:extra}
    Let $d\in\N$ and $\varphi\colon[0,1]\to[0,d]$.
    Then the following are equivalent.
    \begin{enumerate}[nl,r]
        \item $\varphi$ is $\alpha$-Lipschitz and satisfies \ref{i:mono}.
        \item There exists a uniform $F\subset\R^d$ such that $(1-\theta)\dimuLs\theta F = \varphi(\theta)$.
        \item $\varphi$ is an infimum of functions of the form $\psi_{d, \lambda, t}$ for $\lambda\in(0,1]$ and $0\leq t \leq \alpha(1-\lambda)$.
    \end{enumerate}
\end{proposition}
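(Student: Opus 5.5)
The plan is to argue exactly as in \cref{t:mono-class}, with \cref{it:attain-u} in place of \cref{it:attain} and the functions $\psi_{d,\lambda,t}$ in place of $\phi_{d,\lambda,t}$. Here $\alpha = d$ throughout. The key preliminary fact is the uniform-set analogue of \cref{l:hM}: $\psi_{d,\lambda,t}\in\mathcal{M}(d)\cap\mathcal{U}(d)$, and a function $\varphi$ which is $d$-Lipschitz and satisfies \ref{i:mono} satisfies $\varphi\leq\psi_{d,\lambda,\varphi(\lambda)}$ for every $\lambda\in(0,1]$. The second claim splits by region. On $[\lambda,1]$ the bound is precisely the geometric form of \ref{i:mono} from \cref{r:geom}. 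On $(0,\lambda]$ it follows from $\varphi(\theta)\leq\varphi(\lambda)+d(\lambda-\theta)$, which is the $d$-Lipschitz bound. We also need $\varphi(\lambda)\leq d(1-\lambda)$, so that $t=\varphi(\lambda)$ is admissible; this follows from $\varphi(1)=0$ together with the Lipschitz property. Here $\varphi(1)=0$ holds because, by \ref{i:mono}, $\varphi(\theta)/(1-\theta)$ is decreasing and bounded, so $\varphi(\theta)\to0$ as $\theta\to1$.

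\textbf{(i)$\Rightarrow$(ii).} First check that a $d$-Lipschitz $\varphi$ with \ref{i:mono} lies in $\mathcal{U}(d)$. Since $\mathcal{M}(d)\subset\mathcal{H}(d)$ gives \ref{i:superadd} (the proof of that lemma uses only \ref{i:mono}), this holds. Then \cref{it:attain-u} provides a uniform $F$ with $(1-\theta)\dimLs\theta F=\varphi(\theta)$. By \ref{i:mono}, $\theta\mapsto\dimLs\theta F$ is decreasing, so \cref{e:vari2} gives $\dimuLs\theta F=\dimLs\theta F$.

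\textbf{(ii)$\Rightarrow$(iii).} Let $F$ be uniform and set $\varphi_L(\theta)=(1-\theta)\dimLs\theta F$, so that $\varphi_L\in\mathcal{U}(d)$ by \cref{it:attain-u}. By \cref{e:vari2},
\[
(1-\theta)\dimuLs\theta F=(1-\theta)\inf_{0<\lambda\le\theta}\frac{\varphi_L(\lambda)}{1-\lambda}.
\]
The goal is to show this equals $\inf_{0<\lambda\le1}\psi_{d,\lambda,\varphi_L(\lambda)}(\theta)$.
\begin{itemize}
\item For $\lambda\le\theta$, the term $\psi_{d,\lambda,\varphi_L(\lambda)}(\theta)$ equals $(1-\theta)\varphi_L(\lambda)/(1-\lambda)$, which is exactly the corresponding term of the left-hand side.
\item For $\lambda>\theta$, the term is $\varphi_L(\lambda)+d(\lambda-\theta)\geq\varphi_L(\theta)$, using that $\varphi_L$ is $d$-Lipschitz. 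Since $\varphi_L(\theta)$ is itself the $\lambda=\theta$ term, these extra terms do not lower the infimum.
\end{itemize}
This step is where the Lipschitz property replaces the weaker \ref{i:conv} used for the $\phi$ functions. It is the step I expect to need the most care, though it is short.

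\textbf{(iii)$\Rightarrow$(i).} Each $\psi_{d,\lambda,t}$ is $d$-Lipschitz and satisfies \ref{i:mono}; the latter is the slope comparison guaranteed by $t\le d(1-\lambda)$. Both properties are preserved under pointwise infima: for the Lipschitz property this is standard, and for \ref{i:mono} it holds because an infimum of decreasing functions, each divided by the same factor $1-\theta$, is decreasing. Applying this as in \cref{p:inf} completes the cycle of implications.
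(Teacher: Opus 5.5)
Your proposal is correct and follows the route the paper intends: the paper gives no separate proof of \cref{p:extra}, only remarking that it follows by repeating the argument of \cref{t:mono-class} with \cref{it:attain-u} in place of \cref{it:attain} and $\psi_{d,\lambda,t}$ in place of $\phi_{d,\lambda,t}$. Your cycle of implications carries this out faithfully, with the $d$-Lipschitz bound correctly replacing \ref{i:conv} in (ii)$\Rightarrow$(iii) to show that the terms with $\lambda>\theta$ do not lower the infimum.
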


\begin{acknowledgements}
    The authors thank Vilma Orgoványi for comments on a draft version of this paper.
    This collaboration began when H.C.\ visited the University of Jyväskylä, supported by the \emph{JYU Visiting Fellow Programme}.
    He is grateful for the friendly and stimulating research environment there.

	A.B.\ was supported by the Marie Skłodowska-Curie Actions postdoctoral fellowship \emph{FoDeNoF} (grant no.\ 101210409) from the European Union.
    H.C.\ was supported by NSFC 12401107.
    A.R.\ was supported by Tuomas Orponen's grant from the Research Council of Finland via the project \emph{Approximate incidence geometry}, grant no.\ 355453.
    W.W.\ was supported by NSFC 12061086.

    Claude Opus 4.6 was used to write part of the TikZ code for \cref{f:mini} and \cref{f:non-mono}.
    AI was not used for anything else in the document.
\end{acknowledgements}
\appendix
\section{Monotonization of the lower spectrum}\label{s:monot}
The following result was proven under the additional assumption that $X$ is doubling and uniformly perfect in \cite[Theorem~1.1]{zbl:1434.28014}, and then with only the doubling assumption in \cite[Theorem~A.2]{zbl:1479.28010}.
Here we dispense of the doubling assumption, and moreover give a simple proof which only uses monotonicity of covering numbers.
\begin{theorem}\label{p:rel}
    Let $X$ be a non-empty metric space.
    Then for all $0<\theta < 1$,
    \begin{equation*}
        \dimuLs\theta X = \inf_{0<\lambda\leq \theta}\dimLs\lambda X.
    \end{equation*}
\end{theorem}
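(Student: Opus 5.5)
The plan is to prove the two inequalities separately. The inequality $\dimuLs\theta X\le \inf_{0<\lambda\le\theta}\dimLs\lambda X$ is immediate. The reverse inequality will come from chaining scales via the superadditivity of \cref{l:superad}, which holds in an arbitrary metric space.

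\emph{Easy direction.} Fix $0<\lambda\le\theta$ and $0<r<1$, and put $R=r^\lambda$. Since $\lambda\le\theta$ we have $r\le r^\theta\le R<1$, so the defining inequality of $\dimuLs\theta X$ applies to the pair $(r,R)$. This gives $\inf_x N_r(B(x,r^\lambda))\ge C(r^\lambda/r)^s$ for every admissible $s$, hence $\dimLs\lambda X\ge \dimuLs\theta X$.

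\emph{Hard direction: setup.} Let $s<s'<\inf_{0<\lambda\le\theta}\dimLs\lambda X$. I will work in logarithmic coordinates with the packing function $\lb_X$. By \cref{e:cov-eq}, replacing covers by $4r$-separated packings changes $r$ only by a factor $4$, which changes $(R/r)^s$ by the constant factor $4^s$. So it suffices to find $m$ such that
\begin{equation*}
  \lb_X(u,v)\ge s(u-v)-m \quad\text{for all } 0\le v\le\theta u .
\end{equation*}
For each fixed exponent $\mu$ the definition gives $\lb_X(u,\mu u)\ge s'(1-\mu)u-m_\mu$. The whole difficulty is that $m_\mu$ is not uniform in $\mu$, and the range $\mu\in(0,\theta]$ accumulates at $0$.

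\emph{Step 1: a compact range of exponents.} Fix the interval $\mu\in[\theta^2,\theta]$ and a finite grid $\theta^2=\mu_0<\dots<\mu_N=\theta$. Given $\mu$ in this interval, let $\mu_j$ be the grid point with $\mu_j\ge\mu$ closest to it. Since $v\mapsto\lb_X(u,v)$ is decreasing,
\begin{equation*}
  \lb_X(u,\mu u)\ge\lb_X(u,\mu_j u)\ge s'(1-\mu_j)u-m_{\mu_j}.
\end{equation*}
Because $1-\mu\ge1-\theta>0$, a fine enough grid gives $s'(1-\mu_j)\ge s''(1-\mu)$ for some fixed $s<s''<s'$. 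This yields a uniform bound $\lb_X(u,\mu u)\ge s''(1-\mu)u-M$ for all $\mu\in[\theta^2,\theta]$.

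\emph{Step 2: small exponents by chaining.} For $v=\lambda u$ with $\lambda<\theta^2$, first assume $v\ge1$. Let $k\ge1$ be the least integer with $\mu:=\lambda\theta^{-k}\in(\theta^2,\theta]$. Superadditivity (\cref{l:superad}) splits $[v,u]$ into the piece $[\mu u,u]$, handled by Step 1, and the pieces $[\lambda\theta^{-j}u,\lambda\theta^{-j-1}u]$ for $0\le j<k$. Each of these is a pair $(w,\theta w)$ and contributes at least $s''(1-\theta)w-M$. Summing, the main terms add up to $s''(u-v)$. The errors total at most $(k+1)M$, and since $v\ge1$ we have $k\lesssim_\theta\log u$.

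This absorption is the step I expect to be the main obstacle. The errors are $O(\log u)$, while the gain $(s''-s)(u-v)$ is at least $(s''-s)(1-\theta)u$, so $\lb_X(u,v)\ge s(u-v)-m$ for a single $m$. Once $u$ is large the gain dominates; for small $u$ the bound is trivial after enlarging $m$.

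\emph{Step 3: the edge case $v<1$.} Monotonicity in $v$ gives $\lb_X(u,v)\ge\lb_X(u,1)$. The pair $(u,1)$ is covered by the $v\ge1$ case when $1\le\theta u$, and replacing $v$ by $1$ costs only an additive $s$. Finally, I will double-check the boundary cases in the proof of \cref{l:superad}, such as $u<w+1$; that argument is purely metric, so no doubling assumption enters anywhere.
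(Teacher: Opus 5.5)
\textbf{The hard direction has a genuine gap at the passage from covers to packings.} (Your easy direction is fine.) This passage is exactly where doubling is needed.

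The hypothesis $s'<\dimLs\mu X$ bounds $\gamma(u,\mu u)=\log\inf_x N_{2^{-u}}(B(x,2^{-\mu u}))$ from below. Since $\lb_X\le\gamma$, this says nothing about $\lb_X(u,\mu u)$ at the same scales. What \cref{e:cov-eq} actually gives is $\lb_X(u,\mu u)\ge\gamma(u-2,\mu u)$, which is covering information at the exponent $\mu u/(u-2)>\mu$. Because $v\mapsto\gamma(u,v)$ is decreasing, this cannot be pushed back to exponent $\mu$. At $\mu=\theta$ it would even require control of the lower spectrum above $\theta$, which the hypothesis does not give. In $\R^d$ the doubling estimate behind \cref{l:approx-br} repairs this. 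In a general metric space, however, your reduced goal can be false.

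Here is an example. Let $X$ be the boundary of a spherically homogeneous rooted tree, with $d(\xi,\eta)=2^{-n/4}$ when $\xi,\eta$ separate after level $n$. Let the vertices at level $\sigma_j-1$, where $\sigma_j=2^j+2$, have $2^{\lceil\sigma_j/2\rceil}$ children, and let all other vertices have one child.
\begin{itemize}
\item Every window of levels $(V,U]$ with $U\ge\max\{2V-1,3\}$ contains the largest $\sigma_j\le U$, and $U<2\sigma_j$. Hence $\dimuLs{1/2}X\ge1$.
\item Take $u_j=(2\sigma_j+1)/4$. Then $B(x,2^{-u_j/2})$ is a level-$(\sigma_j+1)$ cylinder, and it contains a single level-$(\sigma_{j+1}-1)$ cylinder. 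So $\lb_X(u_j,u_j/2)=0$.
\end{itemize}

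Separately, Step 2 needs exact superadditivity of $\lb_X$ in an arbitrary metric space. The delicate point in the proof of \cref{l:superad} is not the case $u<w+1$. It is that the fine-scale points chosen in $B(x_i,2^{-w})$ need not lie in $B(x,2^{-v})$. That argument only yields $\lb_X(u,v-1)\ge\lb_X(u,w)+\lb_X(w,v)$.

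\textbf{The fix is to stay with covering numbers and drop the chaining; the accumulation of exponents at $0$ is harmless.} Run your Step 1 for $\gamma$ rather than $\lb_X$, with the grid extended down to some $\mu_0>0$ satisfying $s'(1-\mu_0)\ge s$. For $\mu\le\mu_0$, monotonicity in $v$ already gives $\gamma(u,\mu u)\ge\gamma(u,\mu_0u)\ge s'(1-\mu_0)u-m_{\mu_0}\ge s(1-\mu)u-m_{\mu_0}$, because the target slope $s(1-\mu)$ never exceeds $s$. Combined with the fine grid on $[\mu_0,\theta]$, this gives $\gamma(u,v)\ge s(u-v)-M$ for all $0<v\le\theta u$. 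That is directly the defining bound for $\dimuLs\theta X\ge s$, with no packings, no superadditivity, and no $O(\log u)$ absorption.

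This is essentially the paper's proof, phrased there via sequential compactness. From $(u_n,\lambda_n)$ witnessing failure at level $s$, pass to a subsequence with $\lambda_n\to\lambda\in[0,\theta]$. Then compare with $\gamma(u_n,\kappa u_n)$ for a fixed $\kappa\in(\lambda,\theta]$ close to $\lambda$ (or $\kappa=\theta$ if $\lambda=\theta$). The only property used is that $v\mapsto\gamma(u,v)$ is decreasing.
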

\begin{proof}
    It suffices to prove that
    \begin{equation}\label{e:req}
        \dimuLs\theta X \geq \inf_{0<\lambda \leq \theta}\dimLs\lambda X
    \end{equation}
    since the opposite inequality is immediate from the definition.

    Let $\gamma$ be defined as in the proof of \cref{c:dimL-lim}; that is, for $v \leq u$,
    \begin{equation*}
        \gamma(u,v) = \log \inf_{x\in X} N_{2^{-u}}(B(x, 2^{-v})).
    \end{equation*}
    The only facts about $\gamma$ that we will require are that $\gamma$ is non-negative, $v\mapsto \gamma(u,v)$ is decreasing for all $u$, and (from the proof of \cref{c:dimL-lim}) for $0<\theta < 1$,
    \begin{equation*}
        \dimLs\theta X = \liminf_{u\to\infty}\frac{\gamma(u, \theta u)}{u(1-\theta)}.
    \end{equation*}

    We may assume that $\dimuLs\theta X < \infty$, or else the inequality is trivial.
    Let $t > s > \dimuLs\theta X$ be arbitrary.
    By definition of $\dimuLs\theta X$, for all $n \in\N$ there is a $u_n \geq 0$ and $\lambda_n \in [0, \theta]$ such that
    \begin{equation*}
        \gamma(u_n, \lambda_n u_n) \leq s u_n(1-\lambda_n) - n.
    \end{equation*}
    Since $\gamma$ is non-negative, we must have $\lim_{n\to\infty}u_n = \infty$.
    Passing to a subsequence if necessary, we may assume that $\lim_{n\to\infty}\lambda_n = \lambda\in [0,\theta]$.
    If $\lambda = \theta$, then $\lambda_n \leq \theta$ for all $n\in\N$ so that
    \begin{equation*}
        \gamma(u_n, \theta u_n) \leq \gamma(u_n, \lambda_n u_n) \leq s u_n(1-\lambda_n).
    \end{equation*}
    Since $\theta < 1$, $\lim_{n\to\infty}(1-\lambda_n)^{-1} = (1-\theta)^{-1}$ and therefore
    \begin{equation*}
        \dimLs\theta X \leq \liminf_{n\to\infty} \frac{\gamma(u_n, \theta u_n)}{u_n(1-\lambda_n)} \leq s < t.
    \end{equation*}
    Otherwise, $\lambda < \theta$, and let $\lambda < \kappa \leq \theta$ be such that $s(1-\lambda)/(1-\kappa) < t$.
    Since $\lambda_n \leq \kappa$ for all $n$ sufficiently large,
    \begin{equation*}
        \gamma(u_n, \kappa u_n) \leq \gamma(u_n, \lambda_n u_n) \leq s u_n(1-\lambda_n).
    \end{equation*}
    Therefore, exactly as above,
    \begin{equation*}
        \dimLs\kappa X \leq \liminf_{n\to\infty}\frac{\gamma(u_n, \lambda_n u_n)}{u_n(1-\lambda_n)}\cdot\frac{1-\lambda_n}{1-\kappa} \leq s\frac{1-\lambda}{1-\kappa} \leq t.
    \end{equation*}
    In any case, we have shown that for all $t > \dimuLs\theta X$ there is a $\lambda\in(0, \theta]$ so that $\dimLs\lambda X \leq t$.
    Therefore \cref{e:req} follows.
\end{proof}

\section{Summary of the main inequalities}\label{s:ineq}
In this appendix, we list all of the inequalities used in the various classification theorems and state the key properties along with short proofs.
More detailed versions of these proofs can be found throughout the text in the places where the statement is required.
\begin{definition}
    Let $\alpha \geq 0$ and let $\varphi\colon(0,1]\to [0,\alpha]$ be a function.
    We consider the following inequalities concerning $\varphi$.
    \begin{enumerate}[nl]
        \item[(S)]\namedlabel{S}{(S)} \emph{Superadditive:} For all $\lambda,\theta\in(0,1]$,
            \begin{equation*}
                \varphi(\lambda\theta) \geq \varphi(\theta) + \theta\varphi(\lambda).
            \end{equation*}
        \item[(W)]\namedlabel{W}{(W)} \emph{Weak $\alpha$-Lipschitz:} For all $\lambda,\theta\in(0,1]$,
            \begin{equation*}
                \varphi(\lambda\theta) \leq (1-\theta) \alpha + \theta\varphi(\lambda).
            \end{equation*}
        \item[(M)]\namedlabel{M}{(M)} \emph{Monotone:} The function $\theta\mapsto \varphi(\theta)/(1-\theta)$ is decreasing.
        \item[(L)]\namedlabel{L}{(L)} \emph{$\alpha$-Lipschitz:} The function $\varphi$ is $\alpha$-Lipschitz.
    \end{enumerate}
\end{definition}
We note from the introduction and \cref{p:extra} that the inequalities relevant for the various classification results are as follows:
\begin{itemize}[nl]
    \item $\theta\mapsto\dimLs\theta F$ (for general $F$) uses \ref{S} and \ref{W}
    \item $\theta\mapsto\dimLs\theta F$ (for uniform $F$) uses \ref{S} and \ref{L}
    \item $\theta\mapsto\dimuLs\theta F$ (for general $F$) uses \ref{M} and \ref{W}
    \item $\theta\mapsto\dimuLs\theta F$ (for uniform $F$) uses \ref{M} and \ref{L}
\end{itemize}
Let us make the following simple observations.
\begin{enumerate}
    \item \ref{S} implies that $\varphi$ is decreasing, since $\theta\varphi(\lambda) \geq 0$.
        In particular, we may define $\varphi(0) = \lim_{\theta \to 0}\varphi(\theta)$.
        Observe that all of the above inequalities are satisfied for all $\lambda,\theta\in[0,1]$ with this definition.
    \item\label{o} \ref{S} implies that $\varphi(1) = 0$, by taking $\lambda=\theta = 1$.
    \item \ref{W} implies that $\varphi(\theta) \leq \alpha(1-\theta)$, by taking $\lambda = 1$ and recalling \cref{o}.
        In particular, \ref{W} also implies that $\varphi(1) = 0$.
    \item\label{i:wc} \ref{W} is a weak type of convexity at $0$: for all $\lambda\in[0,1]$, the graph of $\varphi$ lies below the line segment connecting $(0,\alpha)$ and $(\lambda, \varphi(\lambda))$.
    \item Using \cref{i:wc}, it follows that \ref{W} holds if and only if the function
        \begin{equation*}
            \theta\mapsto \frac{\alpha - \varphi(\theta)}{\theta}
        \end{equation*}
        is decreasing.
        In particular, $\varphi$ is $\theta^{-1}(\alpha - \varphi(\theta))$-Lipschitz on $[\theta, 1]$.
    \item\label{i:prev} Rearranging \ref{S}, for all $\lambda,\theta\in(0,1]$,
        \begin{equation*}
            \frac{\varphi(\lambda\theta) - \varphi(\theta)}{\theta - \lambda\theta} \geq \frac{\varphi(\lambda)}{1-\lambda}.
        \end{equation*}
        The left hand side is the negative of slope of the line segment from $(\lambda\theta, \varphi(\lambda\theta))$ to $(\theta, \varphi(\theta))$, and the right hand side is the negative slope of the line segment from $(\lambda, \varphi(\lambda))$ to $(1, 0)$.
    \item \ref{M} has a geometric interpretation: for any $0<\theta \leq 1$, $\varphi$ is bounded above on $[\theta, 1]$ by the line segment joining $(\theta, \varphi(\theta))$ and $(1,0)$.
        In particular, it follows from \cref{i:prev} that \ref{M} implies \ref{S}.
    \item If \ref{L} holds and $\varphi(1) = 0$, then \ref{W} holds.
        Indeed, since $\varphi$ is $\alpha$-Lipschitz, $\alpha \lambda \leq \alpha - \varphi(\lambda)$ and therefore
        \begin{equation*}
            \varphi(\lambda\theta) - \varphi(\lambda) \leq \alpha \lambda (1-\theta) \leq (1-\theta)(\alpha - \varphi(\lambda)).
        \end{equation*}
        Rearranging this inequality yields \ref{W}.
\end{enumerate}
\end{document}